\numberwithin{equation}{section}
\newtheorem{theorem}{Theorem}[section]
\newtheorem{lemma}[theorem]{Lemma}
\newtheorem{proposition}[theorem]{Proposition}
\theoremstyle{definition}
\newtheorem{definition}[theorem]{Definition}
\newcommand{\Hom}{\operatorname{Hom}}
\newcommand{\C}{\mathbf{C}}
\newcommand{\Z}{\mathbf{Z}}
\newcommand{\h}{\mathfrak{h}}
\newcommand{\g}{\mathfrak{g}}
\title[Highest weight modules over Borcherds-Bozec superalgebras]
{Highest weight modules over Borcherds-Bozec superalgebras and their character formula}
\author[Zhaobing Fan]{Zhaobing Fan}
\address{Harbin Engineering University,
	Harbin, China}
\email{fanzhaobing@hrbeu.edu.cn}
\author[Jiaqi Huang]{Jiaqi Huang}
\address{Harbin Engineering University,
	Harbin, China}
\email{jiaqihuang@hrbeu.edu.cn}
\thanks{}
\author[Seok-Jin Kang]{Seok-Jin Kang}
\address{Korea Research Institute of Arts and Mathematics,
	Asan-si, Chungcheongnam-do, 31551, Korea}
\email{soccerkang@hotmail.com}
\thanks{}
\author[Yong-Su Shin]{Yong-Su Shin${}^*$}
\thanks{${}^*$The corresponding author}
\address{Department of Mathematics, Sungshin Women's University,  Seoul, 02844, Republic of  Korea}
\email{ysshin@sungshin.ac.kr}
\keywords{Borcherds-Bozec superalgebra, highest weight module, character formula}
\subjclass[2010] {17B37, 17B67, 16G20}
\begin{document}

\begin{abstract}
 We present and prove the Weyl-Kac type
 character formula for the irreducible highest weight modules over
Borcherds-Bozec superalgebras with dominant integral highest weights.
\end{abstract}

\maketitle

\section*{Introduction}

The {\it Kac-Moody algebras} were introduced independently by V. G. Kac and R. V. Moody
as an infinite dimensional generalization of finite dimensional semisimple
Lie algebras over $\C$ \cite{Kac68, Moody68}.
In \cite{Kac74}, Kac proved a character formula, called the {\it Weyl-Kac formula},
for the irreducible highest weight modules with dominant integral highest weights.
It turned out that the Weyl-Kac formula for {\it affine Kac-Moody algebras}
has significant applications to number theory,
combinatorics, mathematical physics, etc.

\vskip 2mm

In \cite{Bor88}, R. E. Borcherds generalized the notion of Kac-Moody algebras
to define {\it Borcherds algebras} associated with Borcherds-Cartan matrices.
These matrices have non-positive diagonal entries, which means the Borcherds algebras
may have simple roots with the norm $\le 0$, the {\it imaginary} simple roots.
Hence the index set $I$ has a decomposition $I = I^{\text{re}} \sqcup I^{\text{im}}$,
where $^{\text{re}} = \{i \in I \mid a_{ii} = 2\}$,
$I^{\text{m}} = \{i \in I \mid a_{ii} \le 0\}$.
He also generalized the Weyl-Kac formula for the case of Borcherds algebras.
A special case of Borcherds algebras, the {\it Monster Lie algebra},
played an important role in Borcherds' proof of the famous {\it  Moonshine conjecture} \cite{Bor92}.

\vskip 2mm

The {\it Borcherds-Bozec algebras} are a further generalization of Borcherds algebras
associated with Borcherds-Cartan matrices having higher degree positive and negative
simple root vectors.
The index set for these simple root vectors is denoted by
$I^{\infty}=(I^{\text{re}}\times \{1\}) \sqcup (I^{\text{im}}\times \Z_{\textgreater 0})$.
Borcherds-Bozec algebras arise as a natural algebraic structure relevant to the theory of
perverse sheaves on the representation varieties of quivers with loops
\cite{Bozec2014a, Bozec2014b}.
The Weyl-Kac type character formula for the irreducible highest weight modules was proved in \cite{BSV2016}.

\vskip 2mm

A {\it Lie superalgebra} is a $\Z_{2}$-graded vector space
$L = L_{\bar 0} \oplus L_{\bar 1}$ with the {\it Lie superbracket}
that satisfies the {\it super-symmetry} and  the {\it super-Jacobi identity}.

\vskip 2mm

The theory of Lie superalgebras was intensively developed by Kac \cite{Kac77}.
In \cite{Kac78}, Kac defined the notion of Kac-Moody superalgebras
and proved the super-version of Weyl-Kac formula.
Inspired by the idea of \cite{Kac78} and \cite{Bor88}, U. Ray proved the
Weyl-Kac formula for Borcherds superalgebras.

\vskip 2mm

In this paper, we introduce the notion of {\it odd Borcherds-Cartan datum}
and the associated {\it Borcherds-Bozec superalgebras}.
The highest weight representation theory is discussed as a mixture of
those over Kac-Moody superalgebras and Borcherds-Bozec algebras.
We also prove important commutation relations of {\it Casimir operator} and
higher degree simple root vectors.
Finally, combining all the above results together,
we present and prove the Weyl-Kac type character formula for the irreducible
highest weight modules over Borcherds-Bozec superalgebras with dominant integral
highest weights.

\vskip 3mm

\section{Odd Borcherds-Cartan Datum}

Let $I$ be an index set which can be countably infinite.
An integer-valued matrix $A=(a_{ij})_{i,j \in I}$ is called an {\it
even symmetrizable Borcherds-Cartan matrix} if it satisfies the following conditions:
\begin{itemize}
\item[(i)] $a_{ii}=2, 0, -2, -4, ...$,

\item[(ii)] $a_{ij}\le 0$ for $i \neq j$,

\item[(iii)] there exists a diagonal matrix $D=\text{diag} (d_{i} \in \Z_{>0} \mid i \in I)$ such that $DA$ is symmetric.
\end{itemize}

\vskip 2mm

Set $I^{\text{re}}=\{i \in I \mid a_{ii}=2 \}$,
$I^{\text{im}}=\{i \in I \mid a_{ii} \le 0\}$ and
$I^{\text{iso}}=\{i \in I \mid a_{ii}=0 \}$.

\vskip 2mm

{ An {\it odd Borcherds-Cartan datum}} consists of :

\begin{itemize}

\item[(a)] an even symmetrizable Borcherds-Cartan matrix $A=(a_{ij})_{i,j \in I}$,

\item[(b)] a free abelian group $P$, the {\it weight lattice},

\item[(c)] $P^{\vee} := \Hom(P, \Z)$, the {\it dual weight lattice},

\item[(d)] $\Pi=\{\alpha_{i} \in P  \mid i \in I \}$, the set of {\it simple roots},

\item[(e)] $\Pi^{\vee}=\{h_i \in P^{\vee} \mid i \in I \}$, the set of {\it simple coroots},

{ \item[(f)] a subset $I^{\text{odd}}$ of $I$, the set of {\it odd indices}},

\end{itemize}

\noindent satisfying the following conditions

\begin{itemize}

\item[(i)] $\langle h_i, \alpha_j \rangle = a_{ij}$ for all $i, j \in I$,

\item[(ii)] $\Pi$ is linearly independent over $\C$,

\item[(iii)] { for all $i \in I^{\text{re}} \cap I^{\text{odd}}$, we have $a_{ij} \in 2 \Z $},

\item[(iv)] for each $i \in I$, there exists an element $\Lambda_{i} \in P$ such that
$$\langle h_j , \Lambda_i \rangle = \delta_{ij} \ \ \ \text{for all} \ i, j \in I,$$

\end{itemize}

\vskip 2mm

{ We denote $I^{\text{even}} = I \setminus I^{\text{odd}}$, the set of {\it even indices}}.
Given an even symmetrizable Borcherds-Cartan matrix, it can be shown that such
{ an odd} Borcherds-Cartan datum always exists, which
is not necessarily unique. The $\Lambda_i$  $(i \in I)$ are called the {\it fundamental weights}.

\vskip 2mm

We define
\begin{equation} \label{eq:P}
P^{+}:=\{\lambda \in P \mid \langle h_i, \lambda \rangle \ge 0 \ \text{for} \ i \in I,
\ \ \langle h_{i}, \lambda \rangle \in 2 \Z \ \text{for} \ i \in I^{\text{re}} \cap I^{\text{odd}}  \}.
\end{equation}
The elements in $P^{+}$ are called the {\it dominant integral weighs}.

\vskip 2mm

The free abelian group $Q:= \bigoplus_{i \in I} \Z \, \alpha_i$ is called the {\it root lattice}.
Set $Q_{+}: = \sum_{i \in I} \Z_{\ge 0}\, \alpha_{i}$ and $Q_{-}: = -Q_{+}$.
For $\beta = \sum k_i \alpha_i \in Q\textbf{}_{+}$, we define its {\it height} to be
{ $\text{ht} (\beta):=\sum k_i$}.

\vskip 2mm

Let ${\mathfrak h} := \C \otimes_{\Z} P^{\vee}$ be the {\it Cartan subalgebra}.
We define
a partial ordering on ${\mathfrak h}^{*}$ by setting $\lambda \ge \mu$
if and only if $\lambda - \mu  \in Q_{+}$ for $\lambda, \mu \in {\mathfrak h}^{*}$.

\vskip 2mm

 We decompose ${\mathfrak h} = {\mathfrak h}' \oplus {\mathfrak h}''$, where
${\mathfrak h}' = \bigoplus_{i \in I} \C \, h_{i}$ and ${\mathfrak h}''$ is its complementary subspace.
Since $A$ is symmetrizable, we can define
a symmetric bilinear form $( \ \, , \ \, )$ on ${\mathfrak h}$ by
\begin{equation} \label{eq:h-bilinear}
\begin{aligned}
& (h, h_i) = d_{i}^{-1} \langle h, \alpha_ {i} \rangle \ \ \text{for} \ h \in {\mathfrak h}, \\
& (h', h'') = 0 \  \ \text{for} \ h', h'' \in {\mathfrak h}''.
\end{aligned}
\end{equation}

{In particular}, we have
$$(h_i, h_j) = d_{j}^{-1} a_{ij} = d_{i}^{-1} a_{ji} = (h_j, h_i).$$

\vskip 2mm

Moreover, since $\Pi$ is linearly independent,
it is straightforward to verify that $( \ \, , \ \, )$ is non-degenerate on ${\mathfrak h}$.
Hence there is a linear isomorphism $\nu: \mathfrak{h} \rightarrow \mathfrak{h}^{*}$
defined by
\begin{equation} \label{eq:linear_iso}
\nu(h)(h') = (h, h') \ \ \text{for} \  h, h' \in {\mathfrak h},
\end{equation}
{ which implies}
$$\nu(h_i) = d_{i}^{-1} \alpha_{i}, \ \ \nu^{-1}(\alpha_i) = d_i h_i.$$

Then we obtain the induced non-degenerate symmetric bilinear for $(\ \, , \ \, )$
on ${\mathfrak h}^{*}$ defined by
\begin{equation} \label{eq:hstar}
(\lambda, \mu): = (\nu^{-1}(\lambda), \mu^{-1}(\mu)) \ \ \text{for} \ \lambda, \mu \in {\mathfrak h}^{*}.
\end{equation}
In particular, we have
\begin{equation} \label{eq:alpha_i}
(\alpha_i, \lambda) = d_{i} \langle h_{i}, \lambda \rangle \ \ \text{for} \ i \in I.
\end{equation}

\vskip 2mm

For each  $i \in I^{\text{re}}$, we define the {\it simple reflection}
$r_{i}:{\mathfrak h}^{*} \rightarrow {\mathfrak h}^{*}$ by
\begin{equation} \label{eq:simple_reflection}
r_{i}(\lambda)= \lambda - \langle h_{i}, \lambda \rangle \,  \alpha_{i}
\ \ \text{for} \ \lambda \in {\mathfrak h}^{*}.
\end{equation}

The subgroup $W$ of $GL({\mathfrak h}^{*})$ generated by the simple reflections $r_{i}$ $(i \in I^{\text{re}})$ is called the {\it Weyl group} of the { odd} Borcherds-Cartan datum given above.
It is easy to check that $(\ \,  , \ \,  )$ is $W$-invariant.

\vskip 2mm

For the rest of the paper, we fix a linear functional $\rho \in {\mathfrak h}^{*}$ satisfying
\begin{equation} \label{eq:rho}
(\rho, \alpha_{i}) = \ \dfrac{1}{2} (\alpha_{i}, \alpha_{i}) \ \ \text{for all} \ i \in I.
\end{equation}

\vskip 3mm

\section{Borcherds-Bozec Superalgebras}

In this section, we define the notion of {\it Borcherds-Bozec superalgebras}.
We first define a new index set
\begin{equation} \label{eq:Iinfty}
I^{\infty}:= (I^{\text{re}} \times \{1\}) \sqcup (I^{\text{im}}
\times \Z_{>0}).
\end{equation}

 For simplicity, we will often write $i$ for $(i,1)$.

 \vskip 2mm

 \begin{definition} \label{def:BBsuper}
 {\rm
The {\it Borcherds-Bozec superalgebra} associated with
 an { odd Borcherds-Bozec Cartan datum $(A, P, P^{\vee},
 \Pi,\Pi^{\vee}, I^{\text{odd}})$} is the Lie superalgebra
 ${\mathfrak g} = {\mathfrak g}_{\bar 0} \oplus {\mathfrak g}_{\bar 1}$
 over $\C$ generated by the elements $e_{il}$, $f_{il}$ $((i,l)\in I^{\infty})$ and ${\mathfrak h}$
 with defining relations
 \begin{equation} \label{eq:BBsuper}
  \begin{aligned}
  & [h, h'] = 0 \ \ \text{for} \ h, h' \in {\mathfrak h}, \\
  & [e_{il}, f_{jk}] = l  \delta_{ij} \delta_{kl}  h_{i} \ \ \text{for} \ i, j \in I, k, l \in \Z_{>0}, \\
  & [h, e_{jl}] = l  \langle h, \alpha_{j} \rangle e_{jl}, \ [h, f_{jl}] = -l \langle h, \alpha_{j} \rangle f_{jl}, \\
  & (\text{ad} e_{i})^{1 - l a_{ij}}  (e_{jl}) = 0 \ \ \text{for} \ i \in I^{\text{re}}, i \neq (j,l), \\
  & (\text{ad} f_{i})^{1 - l a_{ij}}  (f_{jl}) = 0 \ \ \text{for} \ i \in I^{\text{re}}, i \neq (j,l), \\
  & [e_{il},e_{jk}] = [f_{il}, f_{jk}] = 0 \ \ \text{for} a_{ij} = 0.
  \end{aligned}
 \end{equation}
 Here, $h \in {\mathfrak g}_{\bar 0}$, $e_{il}, f_{il}  \in {\mathfrak g}_{\bar 0}$ for $i \in I^{\text{even}}$
 and   $e_{il}, f_{il} \in {\mathfrak g}_{\bar 1}$ for $i \in I^{\text{odd}}$.

 }
 \end{definition}

 We will denote by $|x| = \bar 0, \bar 1$ for homogeneous elements $x \in {\mathfrak g}$.
 Hence $|h| = \bar 0$, $|e_{il}| = |f_{il}| =  \bar 0$ if $i \in I^{\text{even}}$
 and $|e_{il}| = |f_{il}| = \bar 1$ if $i \in I^{\text{odd}}$.

 \vskip 2mm

 For an element $\alpha \in Q$, define
 $${\mathfrak g}_{\alpha} = \{x \in {\mathfrak g} \mid
 [h, x] = \langle h, \alpha \rangle x \ \ \text{for all} \ h \in {\mathfrak h} \}.$$
 Then ${\mathfrak g}$ has the {\it root space decomposition}
 $${\mathfrak g} = \bigoplus_{\alpha>0} {\mathfrak g}_{-\alpha}
 \oplus {\mathfrak h} \oplus \bigoplus_{\alpha>0} {\mathfrak g}_{\alpha}.$$

 If ${\mathfrak g} \neq 0$, $\alpha$ is called a {\it root} of ${\mathfrak g}$.
 Let $\Delta$ be the set of roots of $\mathfrak g$.
 We denote by $\Delta_{+}$ (respectively, $\Delta_{-}$) the set of
 positive (respectively, negative) roots.
 It can be shown that $\dim {\mathfrak g}_{\alpha} = \dim {\mathfrak g}_{-\alpha}$.
 A root is {\it real} if $(\alpha, \alpha)>0$ and is {\it imaginary} if $(\alpha, \alpha) \le 0$.
We use the notation $\Delta^{\text{re}}$ and $\Delta^{\text{im}}$
 for the set of real roots and imaginary roots, respectively.

 \vskip 3mm

 By setting
 \begin{equation} \label{eq:degree}
 \text{deg}(h) = 0, \ \ \text{deg}(e_{il}) = l \alpha_{i}, \ \ \text{deg}(f_{il}) = -l \alpha_{i},
 \end{equation}
 every positive root can be written as
 \begin{equation*}
\alpha  =  s_{1} \alpha_{i_1} + \cdots + s_{r} \alpha_{i_r}
 = \alpha_{j_1} + \cdots +{ \alpha_{j_t}} \ (s_k>0, i_k, j_l \in I).
 \end{equation*}


A positive root $\alpha$ is {\it even} (respectively, {\it odd})
if the number of odd indices $j_l$ in $\alpha$ is even (respectively, odd).
Thus we get
\begin{equation} \label{eq:even}
{\mathfrak g}_{\bar 0} = \bigoplus_{\alpha \in \Delta_{+}^{\text{even}}} {\mathfrak g}_{\pm \alpha}
\ \ \ \text{and}
\ \ \ {\mathfrak g}_{\bar 1} = \bigoplus_{\alpha \in \Delta_{+}^{\text{odd}}} {\mathfrak g}_{\pm \alpha},
\end{equation}
 where $\Delta_{+}^{\text{even}}$ (respectively, $\Delta_{+}^{\text{odd}}$) denotes the set of
 positive even (respectively, odd) roots.

 \vskip 2mm

 \begin{proposition} \label{prop:g-bilinear}

 {\rm
 Let ${\mathfrak g} = \bigoplus_{\alpha \in \Delta} {\mathfrak g}_{\alpha}$ be a
 Borcherds-Bozec superalgebra.

 \vskip 2mm

 Then the non-degenerate symmetric bilinear form $( \ \, , \, \ )$ on ${\mathfrak h}$ can be extended
 to a bilinear form on ${\mathfrak g}$ satisfying the following  properties.

 \vskip 2mm

 \begin{itemize}

 \item[(a)]  $({\mathfrak g}_{\alpha}, {\mathfrak g}_{\beta}) = 0$ if $\alpha + \beta \neq 0$.

\vskip 2mm

 \item[(b)] $( \ \, , \ \, )$ is non-degenerate on ${\mathfrak g}$.

 \vskip 2mm

 \item[(c)] $( \ \, , \ \, )$ is {\it super-symmetric}; i.e.,
 $$(x, y) = -(-1)^{|x||y|} (y,x) \ \ \text{for all homogeneous elements} \ x, y \in {\mathfrak g}.$$

 \item[(d)] $(\ \, , \ \,)$ is {\it invariant}; i.e.,
 $$([x, y], z) = (x, [y, z]) \ \ \text{for all} \ x, y, z \in {\mathfrak g}.$$

 \item[(e)] For $\alpha \in \Delta_{+}$ and $x \in {\mathfrak g}_{\alpha}$, $y \in {\mathfrak g}_{-\alpha}$,
 we have
 $$[x, y] = (x, y) \nu^{-1}(\alpha).$$

 \end{itemize}

 }
 \end{proposition}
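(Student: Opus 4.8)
The plan is to isolate the real content, namely the construction of an invariant, super-symmetric bilinear form on $\mathfrak{g}$ restricting to the given non-degenerate form on $\mathfrak{h}$, and then to read off the five properties. I would first record that (a) and (e) are purely formal consequences of (d). For homogeneous $x\in\mathfrak{g}_{\alpha}$, $y\in\mathfrak{g}_{\beta}$ and $h\in\mathfrak{h}$, invariance gives $(x,[h,y])=([x,h],y)=-([h,x],y)$, where I used that $h$ is even, so that $[x,h]=-[h,x]$. Since $[h,x]=\langle h,\alpha\rangle x$ and $[h,y]=\langle h,\beta\rangle y$, this reads $(\langle h,\alpha\rangle+\langle h,\beta\rangle)(x,y)=0$ for all $h$, whence $(x,y)=0$ unless $\alpha+\beta=0$, which is (a). For (e), if $\alpha\in\Delta_{+}$, $x\in\mathfrak{g}_{\alpha}$, $y\in\mathfrak{g}_{-\alpha}$, then $[x,y]\in\mathfrak{h}$ and $(h,[x,y])=([h,x],y)=\langle h,\alpha\rangle(x,y)=(h,\nu^{-1}(\alpha))(x,y)$ for every $h\in\mathfrak{h}$; non-degeneracy of the form on $\mathfrak{h}$ then forces $[x,y]=(x,y)\,\nu^{-1}(\alpha)$.

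Thus everything rests on constructing the form. I would normalize it by declaring $(\mathfrak{g}_{\alpha},\mathfrak{g}_{\beta})=0$ whenever $\alpha+\beta\neq 0$ (forced by (a)) and by fixing its values on the generators, which are themselves dictated by invariance: from the relation $[e_{il},f_{il}]=l\,h_{i}$ one computes, for $h\in\mathfrak{h}$ with $\langle h,\alpha_{i}\rangle\neq 0$ (such $h$ exists since $\Pi$ is linearly independent), that $l\langle h,\alpha_{i}\rangle(e_{il},f_{il})=([h,e_{il}],f_{il})=(h,[e_{il},f_{il}])=l\,d_{i}^{-1}\langle h,\alpha_{i}\rangle$, so necessarily $(e_{il},f_{il})=d_{i}^{-1}$. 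I would then extend the form by induction on the height $n=\operatorname{ht}(\alpha)$ of a positive root, pairing $\mathfrak{g}_{\alpha}$ with $\mathfrak{g}_{-\alpha}$. Since the positive part is generated by the $e_{il}$ (and dually the negative part by the $f_{il}$), any element of $\mathfrak{g}_{-\alpha}$ is a sum of a scalar multiple of a generator $f_{i,n}$ (present only when $\alpha=n\alpha_{i}$) and brackets $[f_{jl},y']$ with $\operatorname{ht}(y')=n-l<n$; I define $(x,[f_{jl},y']):=([x,f_{jl}],y')$, the right-hand pairing taking place in the strictly lower height $n-l$ and hence already known, and I use the base values on the generator terms.

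The main obstacle is well-definedness: an element of $\mathfrak{g}_{-\alpha}$ can be written as such a combination in many ways, and I must check that the prescribed value is independent of the presentation. Concretely this amounts to compatibility with every defining relation in \eqref{eq:BBsuper} — the Serre-type relations $(\operatorname{ad}e_{i})^{1-la_{ij}}(e_{jl})=0$ and their $f$-analogues, and the commutation relations when $a_{ij}=0$. The clean way to organize this, which I would follow, is to build the form first on the Lie superalgebra $\widetilde{\mathfrak{g}}$ defined by only the relations in the first three lines of \eqref{eq:BBsuper}; there the free triangular decomposition makes the recursion unambiguous, so one obtains an invariant, super-symmetric form on $\widetilde{\mathfrak{g}}$, the super-symmetry relation (c) being verified by an induction on height that carefully tracks the Koszul signs. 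One then shows that the radical of this form on $\widetilde{\mathfrak{g}}$ is an ideal containing all the Serre-type and commuting-relation elements, so that the form descends to the quotient $\mathfrak{g}$.

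Finally I would deduce non-degeneracy (b). Invariance together with super-symmetry make the radical $\mathfrak{r}=\{x\in\mathfrak{g}\mid(x,\mathfrak{g})=0\}$ a two-sided ideal, and by (a) it is $Q$-graded; since the form is non-degenerate on $\mathfrak{h}$ we have $\mathfrak{r}\cap\mathfrak{h}=0$. Because $\mathfrak{g}$ admits no nonzero graded ideal intersecting $\mathfrak{h}$ trivially — the defining relations being exactly those cutting $\mathfrak{g}$ down to this minimal algebra — it follows that $\mathfrak{r}=0$, which is (b). I expect the genuinely delicate points to be the well-definedness and descent argument of the third paragraph and the sign accounting needed to establish (c) in the super setting; the remaining verifications are formal manipulations with the invariance identity (d).
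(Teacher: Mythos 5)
Your formal reductions are correct and clean: deriving (a) and (e) from invariance plus non-degeneracy on $\h$, and the forced values $(e_{il},f_{jk})=\delta_{ij}\delta_{lk}d_{i}^{-1}$, are exactly right, and your height induction is the same recursion the paper uses (its $\Z$-grading $\g=\h\oplus\bigoplus_{N}\g(N)$ by $N=l_{1}+\cdots+l_{r}$ is precisely the height grading, and it too defines $(x,[u,v]):=([x,u],v)$). The genuine gap is at the point you yourself identify as the main obstacle and then dismiss with the sentence ``the free triangular decomposition makes the recursion unambiguous.'' That claim is false as stated. Freeness of $\widetilde{\mathfrak{n}}^{-}$ only says there are no relations beyond those forced by super-antisymmetry and the super-Jacobi identity; but those axioms alone already give many presentations of one element as a sum of brackets of lower-height elements, e.g.
$[[f_{a},f_{b}],f_{c}]=[f_{a},[f_{b},f_{c}]]-(-1)^{|f_{a}||f_{b}|}[f_{b},[f_{a},f_{c}]]$.
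So the recursion needs a well-definedness argument on $\widetilde{\mathfrak{g}}$ exactly as much as on $\g$; passing to $\widetilde{\mathfrak{g}}$ buys you nothing for this issue. What actually settles it --- and this is the paper's route, following Kac --- is that the induction hypothesis (invariance of the already-constructed form in degrees $<N$) yields the identity $([[x_{i},x_{j}],x_{s}],x_{t}])=(x_{i},[x_{j},[x_{s},x_{t}]])$ whenever $|i+j|=|s+t|=N$, $i+j+s+t=0$, $|i|,|j|,|s|,|t|<N$, and this identity is what makes the assigned value independent of the chosen presentation. (A genuinely choice-free construction on $\widetilde{\mathfrak{g}}$ does exist, via a Shapovalov-type or Hopf pairing on the tensor algebras $U(\widetilde{\mathfrak{n}}^{\pm})$, but that is a different mechanism from your recursion and would have to be set up from scratch.)

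Two further steps of your plan are asserted rather than proved. First, the descent: you must show the Serre-type elements $(\text{ad}\,e_{i})^{1-la_{ij}}(e_{jl})$ and the commuting elements for $a_{ij}=0$ lie in the radical of the form on $\widetilde{\mathfrak{g}}$; this is a real computation, not a formality. Second, and more seriously, your proof of (b) invokes the statement that $\g$ as presented in Definition~\ref{def:BBsuper} has no nonzero graded ideal meeting $\h$ trivially, glossed as ``the defining relations being exactly those cutting $\g$ down to this minimal algebra.'' That is a Gabber--Kac-type theorem: it is a deep result already for symmetrizable Kac--Moody algebras, and it has not been established for Borcherds--Bozec superalgebras, which this paper introduces. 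You cannot treat it as part of the definition, since here $\g$ is defined by generators and relations, not as the quotient of $\widetilde{\mathfrak{g}}$ by the maximal such ideal. To be fair, the paper's own proof is silent on (b) as well (it only establishes (a), (c), (d), (e)); but on your route (b) remains unproved, and the core construction has a hole exactly where the well-definedness check should be.
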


 \vskip 2mm

  \begin{proof}
  For each $N \in \Z_{> 0}$, let $\g(N)$ (respectively, $\g(-N)$) be the subspace of $\g$
  spanned by the super-brackets
  $[e_{i_1, l_1}, \ldots, e_{i_r, l_r}]$ (respectively,
  $[f_{i_1, l_1}, \ldots, f_{i_r, l_r}]$)
  such that  $ l_1 + \cdots + l_r = N.$
  Then $\g$ has a $\Z$-graded decomposition
  $$\g = {\mathfrak h} \oplus  \bigoplus_{N \in \Z} \g(N).$$

  We first extend the symmetric bilinear form $(\ \, , \ \,)$ on ${\mathfrak h}$ to
  to a super-symmetric bilinear form on $\g(1)$ by
  \begin{equation*}
  \begin{aligned}
(e_{il}, f_{jk}) &=\delta_{ij}\delta_{lk}d_{i}^{-1} \ \ \text{for all} \  i, j \in I, \ \ k, l \in \Z_{>0}, \\
(e_{il},h) & =(f_{il},h)=(e_{il},e_{il})=(f_{il},f_{il})=0.
\end{aligned}
\end{equation*}

 \noindent
 Then the bilinear form $(\ \, ,\ \, )$ on $\g(1)$ satisfies $\text{(d)}$ as long as both $[x,y]$ and $[y,z]$
    belong to  $\g_(1)$.
 To see this, it suffices to verify that
  $$ ([e_{i1},f_{i1}],h)=(e_{i1},[f_{i1},h])\ \text{for}\  h\in\h,$$
which follows from the definition of $\nu$.

 \vskip 2mm

 Now using the induction on $N$,  we would like to extend $(\ \, , \ \, )$ to a bilinear form on $\g(N)$
 satisfying

 \begin{itemize}

 \item[(i)] $(\g_{i}, \g_{j}) = 0$ if $|i|, |j| \le N$, $i + j \neq 0$,

 \vskip 2mm

 \item[(ii)] the statement (d) holds as long as $[x, y]$ and $[y,z]$ belong to $\g(N)$.

 \end{itemize}

 \vskip 2mm

 Assume that we already have the bilinear form on $\g(N-1)$ satisfying (i) and (ii).
 It suffices to define $(x, y)$ for $x \in \g(\pm N)$ and $y \in \g(\mp N)$.
 Note that $y$ can be expressed as a sum  $y =\sum_{i}  [u_i, v_i]$, where
 $u_{i}, v_{i}$ are homogeneous elements of non-zero degree in $\g(N-1)$.
 Then $[x, u_i] \in \g(N-1)$ and we may define
 $$(x, y) = \sum_{i} ([x, u_i], v_i).$$

 Actually, this is well-defined; if $i, j, s, t \in \Z$ are integers such that
 \begin{equation*}
 \begin{aligned}
 & |i + j| = |s + t| = N, \ i+j+s+t = 0, \
 |i|, |j|, |s|, |t| < N, \\
 & x_{i} \in \g(i), x_{j} \in \g(j), x_{s} \in \g(s), x_{t} \in \g(t),
 \end{aligned}
 \end{equation*}
 then we have
 $$([[x_i, x_j], x_s], x_t) = (x_i, [x_j [x_s, x_t]]),$$
 which gives the well-definedness.

 \vskip 2mm

 Hence we have constructed a bilinear form satisfying (a) and (d).

 \vskip 2mm

 Let $\alpha \in \Delta_{+}$ and $x \in \g_{\alpha}$, $y \in \g_{-\alpha}$.
 Then for any $h \in \h$, we have
 $$
 ([x, y] - (x, y) \nu^{-1}(\alpha), h) = (x, [y, h]) - (x, y) \langle h, \alpha \rangle =0.
 $$
 Thus the statement (e) holds.

 \vskip 2mm

 The statement (c) follows from the definition.
 \end{proof}

 \vskip 2mm

 Let $\alpha \in \Delta_{+}$ be a positive root.
 Choose dual bases $\{e_{\alpha}^{(k)} \} _{k=1}^{d_{\alpha}}$ of ${\mathfrak g}_{\alpha}$
 and  $\{e_{-\alpha}^{(k)} \} _{k=1}^{d_{\alpha}}$ of ${\mathfrak g}_{-\alpha}$
 with respect to $(\ \, , \ \,)$, where $d_{\alpha} = \dim {\mathfrak g}_{\alpha}$.
 That is,
 $$(e_{\alpha}^{(k)}, e_{-\alpha}^{(l)}) = \delta_{kl}, \ \ \
 (e_{-\alpha}^{(l)}, e_{\alpha}^{(k)}) = -\delta_{kl}.$$

 Hence for $x \in {\mathfrak g}_{\alpha}$ and $y \in {\mathfrak g}_{-\alpha}$, we have
 \begin{equation*}
 \begin{aligned}
& x  = \sum_{k=1}^{d_{\alpha}} (x, e_{-\alpha}^{(k)}) e_{\alpha}^{(k)}, \ \ \
y  = \sum_{k=1}^{d_{\alpha}} (e_{\alpha}^{(k)}, y) e_{-\alpha}^{(k)}, \\
& (x, y)  = \sum_{k=1}^{d_{\alpha}} (x, e_{-\alpha}^{(k)}) (e_{\alpha}^{(k)}, y).
 \end{aligned}
 \end{equation*}

 \vskip 2mm

 \begin{lemma}\label{1}
 {\rm
 If $\alpha, \beta \in \Delta$ and $x \in {\mathfrak g}_{\beta - \alpha}$,
 for any homogeneous element $z \in {\mathfrak g}$, we have

 \begin{itemize}

 \item[(a)] $\sum_{k=1}^{d_{\alpha}} e_{-\alpha}^{(k)} \otimes [z, e_{\alpha}^{(k)}]
 = (-1)^{|z|} \sum_{l=1}^{d_{\beta}} [e_{-\beta}^{(l)} \otimes e_{\beta}^{(l)}$
 in ${\mathfrak g} \otimes {\mathfrak g}$,

 \vskip 2mm

 \item[(b)] $\sum_{k=1}^{d_{\alpha}} [e_{-\alpha}^{(k)} [z, e_{\alpha}^{(k)}]]
 = (-1)^{|z|} \sum_{l=1}^{d_{\beta}}[[e_{-\beta}^{(l)}, z], e_{\beta}^{(l)}]$
 in ${\mathfrak g}$,

 \vskip 2mm

  \item[(c)] $\sum_{k=1}^{d_{\alpha}} e_{-\alpha}^{(k)} [z, e_{\alpha}^{(k)}]
 = (-1)^{|z|} \sum_{l=1}^{d_{\beta}}[e_{-\beta}^{(l)}, z]e_{\beta}^{(l)}$
 in $U({\mathfrak g})$.

 \end{itemize}

 }
 \end{lemma}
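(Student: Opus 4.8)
The plan is to prove the tensor identity (a) first and then to read off (b) and (c) from it. Parts (b) and (c) are obtained by applying to both sides of (a) a single fixed linear map: for (b) the super-bracket $m\colon \mathfrak{g}\otimes\mathfrak{g}\to\mathfrak{g}$, $u\otimes v\mapsto[u,v]$, and for (c) the multiplication $\mathfrak{g}\otimes\mathfrak{g}\to U(\mathfrak{g})$, $u\otimes v\mapsto uv$, inside the universal enveloping superalgebra. Since both maps are linear and carry the overall scalar $(-1)^{|z|}$ through unchanged, each of (b) and (c) follows at once from (a). Hence essentially all the content is in part (a).

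To prove (a) I first fix the degrees: taking $z$ homogeneous of degree $\beta-\alpha$ (and $\alpha,\beta\in\Delta_{+}$, with empty sums understood to vanish otherwise), one has $e_{-\alpha}^{(k)}\in\mathfrak{g}_{-\alpha}$ and $[z,e_{\alpha}^{(k)}]\in\mathfrak{g}_{\beta}$, so the left-hand side lies in $\mathfrak{g}_{-\alpha}\otimes\mathfrak{g}_{\beta}$; the same holds for the right-hand side since $[e_{-\beta}^{(l)},z]\in\mathfrak{g}_{-\alpha}$. Both sides can therefore be written uniquely in the basis $\{e_{-\alpha}^{(k)}\otimes e_{\beta}^{(l)}\}$, and it suffices to match coefficients. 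For this I would use the two expansion formulas recorded just before the lemma, namely $w=\sum_{l}(w,e_{-\beta}^{(l)})\,e_{\beta}^{(l)}$ for $w\in\mathfrak{g}_{\beta}$ and $v=\sum_{k}(e_{\alpha}^{(k)},v)\,e_{-\alpha}^{(k)}$ for $v\in\mathfrak{g}_{-\alpha}$. Applying the first to $w=[z,e_{\alpha}^{(k)}]$ shows that the coefficient of $e_{-\alpha}^{(k)}\otimes e_{\beta}^{(l)}$ on the left equals $([z,e_{\alpha}^{(k)}],e_{-\beta}^{(l)})$, while applying the second to $v=[e_{-\beta}^{(l)},z]$ expresses the corresponding coefficient on the right through $(e_{\alpha}^{(k)},[e_{-\beta}^{(l)},z])$.

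The identity (a) thus reduces to the scalar equality of these two matrix coefficients, which I would establish from the invariance (d) and super-symmetry (c) of the bilinear form in Proposition~\ref{prop:g-bilinear}. Concretely, invariance rewrites $([z,e_{\alpha}^{(k)}],e_{-\beta}^{(l)})$ as $(z,[e_{\alpha}^{(k)},e_{-\beta}^{(l)}])$; super-symmetry then transposes this pair, and since both $z$ and $[e_{\alpha}^{(k)},e_{-\beta}^{(l)}]\in\mathfrak{g}_{\alpha-\beta}$ carry parity $|z|$ (the parities of $\mathfrak{g}_{\gamma}$ and $\mathfrak{g}_{-\gamma}$ agreeing), the Koszul sign collapses to a power of $(-1)^{|z|}$; a final use of invariance produces $(e_{\alpha}^{(k)},[e_{-\beta}^{(l)},z])$ together with precisely the scalar $(-1)^{|z|}$ that appears on the right-hand side of (a). Matching coefficients then gives (a).

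I expect the sign bookkeeping to be the only genuine difficulty. The Koszul factor $(-1)^{|z|\,|\cdot|}$ is produced at every transposition of the form and must be reconciled with the deliberately asymmetric normalization of the dual bases, where $(e_{\gamma}^{(k)},e_{-\gamma}^{(l)})=\delta_{kl}$ but $(e_{-\gamma}^{(l)},e_{\gamma}^{(k)})=-\delta_{kl}$; it is this interplay that pins down both the placement of $(-1)^{|z|}$ and the bracket order $[e_{-\beta}^{(l)},z]$ (rather than $[z,e_{-\beta}^{(l)}]$) in the statement. Once the coefficient identity is verified with the correct sign, (a) is proved, and applying $m$ and the product of $U(\mathfrak{g})$ as above yields (b) and (c) respectively.
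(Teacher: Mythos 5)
Your proposal is correct and takes essentially the same approach as the paper. The paper also proves (a) by a duality argument with the invariant super-symmetric form---pairing both sides against arbitrary test tensors $e \otimes f \in \g_{\alpha} \otimes \g_{-\beta}$ via the product form $(a\otimes b, c\otimes d)=(a,c)(b,d)$ on $\g \otimes \g$, which is the same invariance/super-symmetry computation as your dual-basis coefficient matching---and then obtains (b) and (c) exactly as you do, by applying the linear maps $x \otimes y \mapsto [x,y]$ and $x \otimes y \mapsto xy$ to (a).
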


 \begin{proof}
  We define the bilinear form $(\ ,\ )$ on $\g\otimes\g $ by $(a\otimes b,c\otimes d)=(a,c)(b,d)$.
  For  $e\in \g_{\alpha}, \ f\in \g_{-\beta}$, we have
  \begin{equation}
  \begin{aligned}
  (\sum_{s}e_{-\alpha}^{s}\otimes [z,e_{\alpha}^{s}],e\otimes f) &=\sum_{s}(e_{-\alpha}^{s},e)([z,e_{\alpha}^{s}],f)\\
  &=(-1)^{g(\alpha)g(\beta)+1}(e,[z,f]).
  \end{aligned}
  \end{equation}

  \begin{equation}
  \begin{aligned}
  (\sum_{s}[e_{-\beta}^{s},z]\otimes e_{\beta}^{s},e\otimes f) &=\sum_{s}([e_{-\beta}^{s},z],e)(e_{\beta}^{s},f)\\
  &=(-1)^{g(\alpha)g(\beta)+1+g(z)}(e,[z,f]).
  \end{aligned}
  \end{equation}

Thus $\sum_{s}e_{-\alpha}^{s}\otimes [z,e_{\alpha}^{s}]=(-1)^{|z|}\sum_{s}[e_{-\beta}^{s},z]\otimes e_{\beta}^{s}$ in $\g \otimes \g$.

\vskip 2mm

  Applying  (a) to the linear maps
  $$\g \otimes \g \rightarrow \g, \ \ x \otimes y \mapsto [x, y]$$
  and
  $$\g \otimes \g \rightarrow U(\g), \ \ x \otimes y \mapsto xy.$$
  we obtain (b) and (c).
 \end{proof}

 \vskip 3mm

 \section{Representation Theory and Casimir Operator}

 Let ${\mathfrak g}$ be a Borcherds-Bozec superalgebra.
 The category ${\mathscr M}$ consists  of $\g$-modules $M$
 satisfying the following properties:

 \begin{itemize}

 \item[(i)] $M = \bigoplus_{\lambda \in {\mathfrak h}^*} M_{\lambda}$, where
 $M_{\lambda} = \{ v \in M \mid h v = \langle h, \lambda \rangle {\color{red} v}
 \ \text{for all} \ h \in {\mathfrak h} \}$,

 \vskip 2mm

 \item[(ii)] $\dim M_{\lambda} < \infty$ for all $\lambda \in {\mathfrak h}^*$,

 \vskip 2mm

 \item[(iii)] there exist finitely many $\lambda_1, \ldots, \lambda_s \in {\mathfrak h}^*$
 such that
 $$\text{wt}(M) \subset \bigcup_{j=1}^{s} (\lambda_j  - Q_{+}), \ \ \text{where} \ \
 \text{wt}(M) = \{\lambda \in {\mathfrak h}^* \mid M_{\lambda} \neq 0\}.$$

\end{itemize}

 \vskip 2mm

 For a $\g$-module $M = \bigoplus_{\lambda \in {\mathfrak h}^{*}} M_{\lambda}$
 in the category $\mathscr{M}$, we define its {\it character} to be
 $$\text{ch} M = \sum_{\lambda \in {\mathfrak h}^{*}} (\dim V_{\lambda}) e^{\lambda},$$
 where  $e^{\lambda}$ $(\lambda \in {\mathfrak h}^{*})$ denotes the additive basis of
 $\C[{\mathfrak h}^{*}]$.

 \vskip 2mm

 Let $\lambda \in \text{wt}(M)$.
 Since $e_{il} M_{\lambda} \subset M_{\lambda + l \alpha_{i}}$, due to the condition (iii),
 it is easy to see that $\g_{\alpha} M_{\lambda} = 0$ for all but finitely many
 $\alpha \in \Delta_{+}$.

 \vskip 2mm

 We define an element in $U(\g)$ by
 \begin{equation} \label{eq:Omega0}
 \Omega_{0} = 2 \sum_{\alpha \in \Delta_{+}}
 \sum_{k=1}^{d_{\alpha}} e_{-\alpha}^{(k)} e_{\alpha}^{(k)},
 \end{equation}
 where $\{e_{\alpha}^{(k)}\}_{k=1}^{d_{\alpha}}$ and
 $\{e_{-\alpha}^{(k)}\}_{k=1}^{d_{\alpha}}$ are dual bases of $\g_{\alpha}$
 and $\g_{-\alpha}$ for $\alpha \in \Delta_{+}$.
 Then for any module $M$ in the category ${\mathscr M}$, the operator
 $\Omega_{0}$ is well-defined.

 \vskip 2mm

 The {\it Casimir operator} $\Omega$ on $M= \bigoplus_{\lambda \in {\mathfrak h}^{*}} M_{\lambda}$
 is defined as follows:
 \begin{equation} \label{eq:Casimir}
 \Omega(v) = (\lambda + 2 \rho, \lambda) v + \Omega_{0}(v)
 \ \ \text{for} \ v \in M_{\lambda}.
 \end{equation}

 \vskip 2mm

 \begin{proposition} \label{prop:commutation}
 {\rm
 Let $M = \bigoplus_{\lambda \in {\mathfrak h}^{*}} M_{\lambda}$ be a $\g$-module
 in the category ${\mathscr M}$ and let $v \in M_{\lambda}$.

 \vskip 2mm

 Then the Casimir operator satisfies the following commutation relations:
 \begin{equation} \label{eq:commutation}
 \begin{aligned}
& [\Omega, f_{i}] (v)  = 0 \ \ \text{for} \ i \in I^{\text{re}}, \\
& [\Omega, f_{il}](v)  = (l^2 - l) (\alpha_i, \alpha_i) f_{il} v
 \ \ \text{for} \ i \in I^{\text{im}}, l \in \Z_{>0}.
 \end{aligned}
 \end{equation}

 }

 \end{proposition}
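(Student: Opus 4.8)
The plan is to decompose the Casimir operator as $\Omega = \Omega_{\h} + \Omega_0$, where $\Omega_{\h}$ denotes the ``Cartan part'' acting on a weight space $M_\mu$ as the scalar $(\mu + 2\rho, \mu)$ (cf. \eqref{eq:Casimir}), so that $[\Omega, f_{il}](v) = [\Omega_{\h}, f_{il}](v) + [\Omega_0, f_{il}](v)$ for $v \in M_\lambda$. Since $f_{il}v \in M_{\lambda - l\alpha_i}$, the first term is a pure scalar difference that I would compute directly:
\begin{equation*}
[\Omega_{\h}, f_{il}](v) = \bigl[(\lambda - l\alpha_i + 2\rho, \lambda - l\alpha_i) - (\lambda + 2\rho, \lambda)\bigr] f_{il}v = \bigl[(l^2 - l)(\alpha_i, \alpha_i) - 2l(\lambda, \alpha_i)\bigr] f_{il}v,
\end{equation*}
where the second equality uses the symmetry of $(\ ,\ )$ and the defining property $(\rho, \alpha_i) = \tfrac12(\alpha_i, \alpha_i)$ from \eqref{eq:rho}. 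In this way both assertions reduce to the single identity $[\Omega_0, f_{il}](v) = 2l(\lambda, \alpha_i) f_{il}v$, valid for every $(i,l) \in I^{\infty}$; the apparent dichotomy between $I^{\text{re}}$ and $I^{\text{im}}$ is only an artifact of the fact that $l = 1$ is forced when $i \in I^{\text{re}}$, which kills the $(l^2 - l)$ term.

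Next I would compute $[\Omega_0, f_{il}]$ from its definition \eqref{eq:Omega0}. Applying the super-Leibniz rule in $U(\g)$ gives
\begin{equation*}
[\Omega_0, f_{il}] = 2\sum_{\beta \in \Delta_{+}}\sum_{k=1}^{d_\beta}\Bigl(e_{-\beta}^{(k)}[e_\beta^{(k)}, f_{il}] + (-1)^{|e_\beta^{(k)}||f_{il}|}[e_{-\beta}^{(k)}, f_{il}]\, e_\beta^{(k)}\Bigr),
\end{equation*}
which is a finite sum when applied to $v$, by the local finiteness built into category $\mathscr{M}$ (condition (iii)). The key device is Lemma \ref{1}(c): applied to the pair of roots $\beta$ and $\beta - l\alpha_i$ it rewrites the ``first-type'' summand attached to $\beta$ as a ``second-type'' summand attached to the shifted root $\beta - l\alpha_i$. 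I would then sum over all $\beta \in \Delta_{+}$ and check that these contributions telescope, so that the bulk of the sum cancels in pairs and only boundary terms survive — those for which the shift $\beta - l\alpha_i$ leaves $\Delta_{+}$, the decisive one being $\beta = l\alpha_i$, where the relevant bracket lands in $\h = \g_0$.

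Finally I would evaluate this surviving Cartan boundary term. At $\beta = l\alpha_i$ the brackets $[e_{\pm l\alpha_i}^{(k)}, f_{il}]$ lie in $\h$, and Proposition \ref{prop:g-bilinear}(e) together with the dual-basis expansion identifies the resulting Cartan contribution with $\nu^{-1}(l\alpha_i)$, whose action on $v \in M_\lambda$ is $(l\alpha_i, \lambda) = l(\alpha_i, \lambda)$ by \eqref{eq:alpha_i} and \eqref{eq:linear_iso}. Carrying the factor $2$ from \eqref{eq:Omega0} and recombining yields $[\Omega_0, f_{il}](v) = 2l(\lambda, \alpha_i) f_{il}v$, which closes the reduction above. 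I expect the main obstacle to be the careful bookkeeping of the super signs $(-1)^{|e_\beta^{(k)}||f_{il}|}$ throughout the telescoping step — ensuring that the interior terms genuinely cancel and that the commutators produced by reordering Cartan elements past $f_{il}$ are correctly absorbed — since this is precisely where the parities of the root vectors and the coefficient $(l^2 - l)(\alpha_i, \alpha_i)$ are pinned down.
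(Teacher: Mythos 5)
Your proposal is correct and is essentially the paper's own argument: the paper likewise isolates the scalar (Cartan) part of $\Omega$, expands $[\Omega_0, f_{il}]$ by the super-Leibniz rule, cancels all terms except the one at $\beta = l\alpha_i$ by applying Lemma \ref{1}(c) with the shift $\beta \mapsto \beta - l\alpha_i$, and evaluates that surviving boundary term (recorded in the paper as $[\Omega_0, f_{il}] = 2 d_i l\, f_{il} h_i$, which acts on $v \in M_{\lambda}$ precisely as your scalar $2l(\lambda, \alpha_i)$). The only trivial slip is notational: at the boundary it is $[e_{l\alpha_i}^{(k)}, f_{il}]$, not $[e_{-l\alpha_i}^{(k)}, f_{il}]$, that lands in $\mathfrak{h}$.
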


 \begin{proof}
  For any $(i,l)\in I^{\infty}$, we have
  \begin{align*}
  \sum_{\alpha\in\Delta_{+}}\sum_{k=1}^{d_{\alpha}}[e_{-\alpha}^{k}e_{\alpha}^{k},f_{il}] & =\sum_{\alpha\in\Delta_{+}}\sum_{k=1}^{d_{\alpha}}(e_{-\alpha}^{k}e_{\alpha}^{k}f_{il}-f_{il}e_{-\alpha}^{k}e_{\alpha}^{k})\\
   &=\sum_{\alpha\in\Delta_{+}}\sum_{k=1}^{d_{\alpha}}(e_{-\alpha}^{k}[e_{\alpha}^{k},f_{il}]-[f_{il},e_{-\alpha}^{k}]e_{\alpha}^{k}).\\
  \end{align*}

  Since $$\sum_{\alpha\in\Delta_{+}}\sum_{k=1}^{d_{\alpha}}e_{-\alpha}^{k}e_{\alpha}^{k}f_{il}=\sum_{k=1}^{d_{l\alpha_i}}e_{-l\alpha_{i}}^{k}[e_{l\alpha_{i}}^{k},f_{il}]+\sum_{\alpha\in\Delta_{+}-\{l\alpha_{i}\}}\sum_{k=1}^{d_{\alpha}}e_{-\alpha}^{k}[e_{\alpha}^{k},f_{il}],$$

  $$\sum_{k}e_{-l\alpha_{i}}^{k}[e_{l\alpha_{i}}^{k},f_{il}]=d_{i}lf_{il}h_{i},$$
  by Lemma\ \ref{1}, we obtain
  \begin{align*}
  \sum_{\alpha\in\Delta_{+}-\{l\alpha_{i}\}}\sum_{k}e_{-\alpha}^{k}[e_{\alpha}^{k},f_{il}] &=(-1)^{|\alpha|
  |l\alpha_{i}|+1}\sum_{\alpha\in\Delta_{+}-\{l\alpha_{i}\}}\sum_{k}e_{-\alpha}^{k}[f_{il},e_{\alpha}^{k}]\\
  &=(-1)^{|\alpha| |l\alpha_{i}|+|l\alpha_{i}|+1}\sum_{\alpha\in\Delta_{+}-\{l\alpha_{i}\}}\sum_{k}[e_{-\alpha+l\alpha_{i}}^{k},f_{il}]e_{\alpha-l\alpha_{i}}^{k}\\
  &=\sum_{\alpha\in\Delta_{+}-\{l\alpha_{i}\}}\sum_{k}[f_{il},e_{-\alpha+l\alpha_{i}}^{k}]e_{\alpha-l\alpha_{i}}^{k}\\
  &=\sum_{\beta\in\Delta_{+}}\sum_{k}[f_{il},e_{-\beta}^{k}]e_{\beta}^{k},
  \end{align*}
  which yields
  $$[\Omega_{0},f_{il}]=2d_{i}lf_{il}h_{i}.$$
  Therefore we obtain
  \begin{align*}
  [\Omega,f_{il}]v &=(\lambda-l\alpha_{i}+2\rho,\lambda-l\alpha_{i})f_{il}v-(\lambda+2\rho,\lambda)f_{il}v+2(l\alpha_{i},\lambda)f_{il}v\\
  &=(l^{2}-l)(\alpha_{i},\alpha_{i})f_{il}v.
  \end{align*}
  \end{proof}

 \vskip 2mm

 A $\g$-module $V$ is called a {\it highest weight module} with
 highest weight $\lambda$ if there exists a non-zero vector $v_{\lambda} \in V$
 such that

 \begin{itemize}

 \item[(i)] $h v_{\lambda} = \langle h, \lambda \rangle v_{\lambda}$ for all $h \in {\mathfrak h}$,

 \item[(ii)] $e_{il} v_{\lambda} = 0$ for all $(i,l) \in I^{\infty}$,

 \item[(iii)] $V = U(\g) v_{\lambda}$.

 \end{itemize}

 \vskip 2mm

 Such a vector $v_{\lambda}$ is called a {\it highest weight vector} with highest weight $\lambda$.
 Note that a highest weight module $V$ has a decomposition
 $V = \bigoplus_{\mu \le \lambda} V_{\mu}$.

 \vskip 2mm

For $\lambda \in {\mathfrak h}^{*}$, let $J(\lambda)$ be the left ideal of $U(\g)$
generated by the elements $h - \langle h, \lambda \rangle \mathbf{1}$ for $h \in {\mathfrak h}$,
and $e_{il}$ for $(i,l) \in I^{\infty}$.
Then $M(\lambda): = U(\g) \big/ J(\lambda)$ is a highest weight $\g$-module
with highest weight $\lambda$, called the {\it Verma module}.
Since $M(\lambda)$ is a free $U(\g^{-})$-module
generated by $v_{\lambda} = \mathbf{1} + j(\lambda)$,
every highest weight $\g$-module with highest weight $\lambda$ is a homomorphic
image of $M(\lambda)$.

\vskip 2mm

It can be shown that $M(\lambda)$ has a unique maximal submodule $R(\lambda)$.
We denote by $V(\lambda):= M(\lambda) \big/ R(\lambda)$ the irreducible
quotient of $M(\lambda)$.

 \vskip 2mm

 \begin{lemma}
 {\rm
 For a weight $\mu$ of $V(\lambda)$  with $\lambda \in P^{+}$, set
 $$W \circ \mu = \{w(\mu + \rho) - \rho \mid w \in W\}.$$

 Let $\tau$ be the element in $W \circ \mu$  such that $\text{ht}(\lambda- \tau)$ is minimal.
 Then
 $$(\tau + \rho, \alpha_i) \ge 0 \ \ \text{for all} \ i \in I^{\text{re}}.$$

 }
 \end{lemma}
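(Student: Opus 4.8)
The plan is to prove $(\tau+\rho,\alpha_i)\ge 0$ by a height-descent contradiction, after first translating the assertion into a condition on the coroot pairing. Two elementary reductions come first. Using \eqref{eq:rho} together with $(\alpha_i,\alpha_i)=d_i a_{ii}=2d_i$ for $i\in I^{\text{re}}$ and \eqref{eq:alpha_i}, I obtain $\langle h_i,\rho\rangle=1$ and, more importantly, $(\tau+\rho,\alpha_i)=d_i\langle h_i,\tau+\rho\rangle$ with $d_i>0$; hence it suffices to prove $\langle h_i,\tau+\rho\rangle\ge 0$ for every $i\in I^{\text{re}}$. Second, \eqref{eq:simple_reflection} gives the dot-action formula $r_i\circ\nu:=r_i(\nu+\rho)-\rho=\nu-\langle h_i,\nu+\rho\rangle\,\alpha_i$, so that $r_i\circ\tau\in W\circ\mu$ whenever $\tau\in W\circ\mu$.

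Next I would establish integrality: $\langle h_i,\tau+\rho\rangle\in\Z$ for $i\in I^{\text{re}}$. Writing $\tau+\rho=w(\mu+\rho)$ for some $w\in W$, an induction on the length of $w$—starting from $\langle h_j,\mu+\rho\rangle=\langle h_j,\mu\rangle+1\in\Z$ (as $\mu\in P$) and propagating through $\langle h_j,r_k\xi\rangle=\langle h_j,\xi\rangle-\langle h_k,\xi\rangle a_{kj}$ using $a_{kj}\in\Z$—shows that $\tau-\mu=(w-1)(\mu+\rho)\in Q$. Combined with $\mu\in P$ this yields $\langle h_i,\tau\rangle\in\Z$, hence $\langle h_i,\tau+\rho\rangle\in\Z$, and it also shows $\lambda-\tau\in Q$ so that $\text{ht}(\lambda-\tau)$ is well defined.

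The main step is then short. Suppose toward a contradiction that $\langle h_i,\tau+\rho\rangle<0$ for some $i\in I^{\text{re}}$. Put $\tau':=r_i\circ\tau=\tau-\langle h_i,\tau+\rho\rangle\,\alpha_i\in W\circ\mu$, so that $\lambda-\tau'=(\lambda-\tau)+\langle h_i,\tau+\rho\rangle\,\alpha_i\in Q$. Since $\langle h_i,\tau+\rho\rangle$ is a \emph{negative integer}, we get $\text{ht}(\lambda-\tau')=\text{ht}(\lambda-\tau)+\langle h_i,\tau+\rho\rangle<\text{ht}(\lambda-\tau)$, contradicting the minimality of $\text{ht}(\lambda-\tau)$. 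Therefore $\langle h_i,\tau+\rho\rangle\ge 0$, equivalently $(\tau+\rho,\alpha_i)\ge 0$, for all $i\in I^{\text{re}}$.

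The delicate point is not the contradiction step but the legitimacy of the set-up. I would have to confirm that a minimizer $\tau$ exists at all, which requires $\text{ht}(\lambda-\nu)$ to be bounded below on the (possibly infinite) orbit $W\circ\mu$; I expect to obtain this from the $W$-invariance of $(\ ,\ )$—so that $(\nu+\rho,\nu+\rho)$ is constant along the orbit—together with the fact that $\lambda+\rho$ is dominant and regular on the real roots (indeed $(\lambda+\rho,\alpha_i)=d_i\langle h_i,\lambda\rangle+d_i>0$ for $i\in I^{\text{re}}$), which confines the orbit from above relative to $\lambda$. This boundedness, and the integrality ensuring the reflection $r_i\circ\tau$ remains in the same coset of $Q$, are the two things that genuinely need care. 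By contrast, the super and Borcherds features do not enter here: $W$ is generated only by the real reflections $r_i$, and the reflection formula \eqref{eq:simple_reflection} is insensitive to the parity of $i$, so the new phenomena (imaginary simple roots, higher-degree and odd generators) play no role in this lemma and surface only later in the character-formula argument.
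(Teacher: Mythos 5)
Your proof is correct and is essentially the paper's own argument: both suppose $(\tau+\rho,\alpha_i)<0$ for some $i\in I^{\text{re}}$, apply the dot-action reflection $\nu = r_i(\tau+\rho)-\rho = \tau - \langle h_i,\tau+\rho\rangle\,\alpha_i \in W\circ\mu$, and conclude $\text{ht}(\lambda-\nu)<\text{ht}(\lambda-\tau)$, contradicting minimality. Your extra verifications (integrality of $\langle h_i,\tau+\rho\rangle$ and existence of the minimizer) are details the paper leaves implicit rather than a genuinely different route.
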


\begin{proof}
Suppose $(\tau + \rho, \alpha_{i})<0$ for some $i \in I^{\text{re}}$ and set
$$\nu = r_{i}(\tau + \rho) - \rho = \tau - \langle h_i, \tau + \rho \rangle \alpha_i  \in W \circ \mu.$$
Then it is easy to see that $$\lambda - \nu = \lambda - \tau
+ \langle h_i, \tau + \rho \rangle \alpha_{i},$$
which implies $\text{ht}(\lambda - \nu) < \text{ht}(\lambda - \tau)$, a contradiction.
\end{proof}

 \vskip 3mm

 \section{The Character Formula}

 In this section, we give a character formula for the irreducible highest weight module
 $V(\lambda)$ with highest weight $\lambda \in P^{+}$.

 \vskip 2mm

 Let
 \begin{equation} \label{eq:R}
 R = \dfrac{\prod_{\alpha \in \Delta_{+}^{\text{eve}}} (1 - e^{-\alpha})^{\dim \g_{\alpha}}   }
 {\prod_{\alpha \in \Delta_{+}^{\text{odd}}} (1 + e^{-\alpha})^{\dim \g_{\alpha}}  }.
 \end{equation}

 By Poincar\'e-Birkhoff-Witt Theorem for Lie superalgebras \cite{Kac77},
we have
\begin{equation} \label{eq:PBW}
(e^{\rho} R) \text{ch} M(\lambda) = e^{\lambda+\rho}.
\end{equation}

\vskip 2mm

A similar argument in \cite[Lemma 1.6] {Kac78} gives
 \begin{lemma} \label{lem:wR}
 {\rm
 For all $w \in W$, we have
 \begin{equation} \label{eq:wR}
w (e^{\rho} R) = \epsilon(w) (e^{\rho} R).
 \end{equation}

 }
 \end{lemma}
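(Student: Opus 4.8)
The plan is to reduce the identity to the case of a single simple reflection and then verify it by factoring $e^{\rho}R$. First I would note that both sides of \eqref{eq:wR} behave multiplicatively in $w$: if $w(e^{\rho}R)=\epsilon(w)(e^{\rho}R)$ and $w'(e^{\rho}R)=\epsilon(w')(e^{\rho}R)$, then applying $w$ to the second identity and using $\epsilon(ww')=\epsilon(w)\epsilon(w')$ gives $ww'(e^{\rho}R)=\epsilon(ww')(e^{\rho}R)$. Since $W$ is generated by the simple reflections $r_{i}$ $(i\in I^{\text{re}})$ and $\epsilon(r_{i})=-1$, it suffices to prove
$$r_{i}(e^{\rho}R)=-(e^{\rho}R)\qquad\text{for every }i\in I^{\text{re}}.$$

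Throughout I would use three facts about a real index $i$. First, $\langle h_{i},\rho\rangle=1$, so that $r_{i}(\rho)=\rho-\alpha_{i}$; this follows from \eqref{eq:rho} and \eqref{eq:alpha_i}. Second, $r_{i}$ permutes the positive roots not proportional to $\alpha_{i}$, preserving both their parity and the dimensions $\dim\g_{\alpha}$; the permutation statement is the usual coefficient argument, and the dimension statement is the integrability of $\g$ with respect to the real root $\alpha_{i}$. Third, the only positive roots proportional to $\alpha_{i}$ are $\alpha_{i}$ itself when $i\in I^{\text{even}}$, and $\alpha_{i}$ together with $2\alpha_{i}$ when $i\in I^{\text{odd}}$, the latter arising from the $\mathfrak{osp}(1|2)$-structure generated by the odd pair $e_{i},f_{i}$, with $\dim\g_{\alpha_{i}}=\dim\g_{2\alpha_{i}}=1$.

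For $i\in I^{\text{re}}\cap I^{\text{even}}$ the argument is the classical one: I would split the single even factor $(1-e^{-\alpha_{i}})$ off from $R$, apply $r_{i}$, and use $r_{i}(\rho)=\rho-\alpha_{i}$ together with the invariance of the product over the remaining positive roots to obtain
$$r_{i}(e^{\rho}R)=e^{\rho-\alpha_{i}}(1-e^{\alpha_{i}})\cdot P=-e^{\rho}(1-e^{-\alpha_{i}})\cdot P=-(e^{\rho}R),$$
where $P$ denotes the $r_{i}$-invariant product over $\Delta_{+}\setminus\{\alpha_{i}\}$ and the middle step is the identity $e^{\rho-\alpha_{i}}(1-e^{\alpha_{i}})=-e^{\rho}(1-e^{-\alpha_{i}})$.

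The essential new point is the case $i\in I^{\text{re}}\cap I^{\text{odd}}$, and this is where condition (iii) of the odd Borcherds-Cartan datum enters. Here I would first combine the two factors of \eqref{eq:R} attached to $\alpha_{i}$: since $\alpha_{i}$ is odd and $2\alpha_{i}$ is even, their joint contribution is
$$\frac{1-e^{-2\alpha_{i}}}{1+e^{-\alpha_{i}}}=1-e^{-\alpha_{i}},$$
so that $R$ again carries a single factor $(1-e^{-\alpha_{i}})$ and the computation collapses to the even case. The role of condition (iii), that $a_{ij}\in 2\Z$ for all $j$, is to force $\langle h_{i},\alpha\rangle$ to be even for every $\alpha\in Q$; this makes the coefficient of $\alpha_{i}$ in $r_{i}(\alpha)=\alpha-\langle h_{i},\alpha\rangle\alpha_{i}$ change by an even amount, so that $r_{i}$ preserves the parity of every root and hence maps even factors to even factors and odd factors to odd factors. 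I expect this verification, that $r_{i}$ respects the even/odd splitting in \eqref{eq:R}, to be the main obstacle; once it and the dimension-invariance $\dim\g_{r_{i}\alpha}=\dim\g_{\alpha}$ are in hand, the permutation of the remaining positive roots and the identity $e^{\rho-\alpha_{i}}(1-e^{\alpha_{i}})=-e^{\rho}(1-e^{-\alpha_{i}})$ conclude the proof exactly as in the even case.
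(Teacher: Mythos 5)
Your proof is correct and follows essentially the same route as the paper: the paper offers no independent argument for this lemma, simply invoking \cite[Lemma 1.6]{Kac78}, and what you have written out (reduction to simple reflections, $\langle h_i,\rho\rangle=1$ so $r_i(\rho)=\rho-\alpha_i$, the permutation of the positive roots not proportional to $\alpha_i$, and the merging of the $\alpha_i$ and $2\alpha_i$ factors into $1-e^{-\alpha_i}$ for $i\in I^{\text{re}}\cap I^{\text{odd}}$) is precisely the standard argument behind that citation. Your observation that condition (iii) of the odd Borcherds--Cartan datum is exactly what makes $r_i$ parity-preserving, hence what keeps the even/odd split of $R$ intact, is the right adaptation to the Borcherds--Bozec super setting and is a detail the paper leaves entirely implicit.
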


 \vskip 2mm

 For  $\g$-module $M$ in the category $\mathscr{M}$,
 we say that $m \in M_{\mu}$ is a {\it primitive vector} if
 there exists a submodule $U \subset M$ such that
 \begin{equation} \label{eq:primitive}
m \notin U, \  \ e_{il} \in U \ \ \text{for all}  \  (i,l) \in I^{\infty}.
\end{equation}

  In this case, $\mu$ is called a {\it primitive weight} of $M$.

  \vskip 2mm

\begin{proposition} \label{prop:Omega0}
{\rm
Let $M(\lambda)$ be the Verma module with highest weight $\lambda \in P^{+}$.

\vskip 2mm

If $m \in M(\lambda)_{\mu}$ is a primitive vector, then we have
\begin{equation} \label{eq:Omega0}
\Omega_{0}(m) = 0.
\end{equation}
}
\end{proposition}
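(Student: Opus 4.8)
The plan is to reduce the statement to a property of the image of $m$ in a quotient module and then to exploit the semisimplicity of the Casimir operator on the Verma module. Write $U$ for the submodule witnessing that $m$ is primitive, so that $m\notin U$ while $e_{il}m\in U$ for every $(i,l)\in I^{\infty}$. The first, and most important, step is to upgrade the hypothesis ``$e_{il}m\in U$ for the generators $e_{il}$'' to ``$e_{\alpha}^{(k)}m\in U$ for every $\alpha\in\Delta_{+}$ and every $k$.'' Granting this, each summand $e_{-\alpha}^{(k)}e_{\alpha}^{(k)}m$ of $\Omega_{0}=2\sum_{\alpha\in\Delta_{+}}\sum_{k}e_{-\alpha}^{(k)}e_{\alpha}^{(k)}$ lies in $U$ because $U$ is a $\mathfrak{g}$-submodule, so $\Omega_{0}m\in U$; equivalently, the image $\bar m\in M(\lambda)/U$ is a highest weight vector of weight $\mu$ with $\Omega_{0}\bar m=0$.

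I would prove the claim $e_{\alpha}^{(k)}m\in U$ by induction on $\mathrm{ht}(\alpha)$. Since $\mathfrak{g}_{\alpha}$ sits inside the positive part $\bigoplus_{\beta>0}\mathfrak{g}_{\beta}$, which is generated by the $e_{jk}$, the space $\mathfrak{g}_{\alpha}$ is spanned by the generators $e_{il}$ with $l\alpha_{i}=\alpha$ (the base case, handled directly by primitivity) together with brackets $[e_{il},z]$ where $z$ is a homogeneous root vector of height $\mathrm{ht}(\alpha)-l<\mathrm{ht}(\alpha)$. For such a bracket the module axiom gives
$$[e_{il},z]\,m=e_{il}(z\,m)-(-1)^{|e_{il}|\,|z|}\,z\,(e_{il}m).$$
By the inductive hypothesis $z\,m\in U$, hence $e_{il}(z\,m)\in U$; and $e_{il}m\in U$ by primitivity, hence $z\,(e_{il}m)\in U$. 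Therefore $[e_{il},z]\,m\in U$, which completes the induction and yields $\Omega_{0}m\in U$.

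It remains to promote $\Omega_{0}m\in U$ to $\Omega_{0}m=0$, and this is the delicate point. Here I would invoke the commutation relations (Proposition~\ref{prop:commutation}): peeling generators off a monomial and iterating shows that each $f_{i_{1}l_{1}}\cdots f_{i_{r}l_{r}}v_{\lambda}$ is an eigenvector of $\Omega$ with eigenvalue $(\lambda+2\rho,\lambda)+\sum_{j}(l_{j}^{2}-l_{j})(\alpha_{i_{j}},\alpha_{i_{j}})$, so $\Omega$ acts semisimply on $M(\lambda)$ and preserves every weight space. Since $U$ is a $\mathfrak{g}$-submodule it is $\Omega$-stable and decomposes into $\Omega$-eigenspaces, and since $\bar m$ is a highest weight vector its Casimir eigenvalue in the quotient is $(\mu+2\rho,\mu)$; consequently every $\Omega$-eigencomponent of $m$ with eigenvalue different from $(\mu+2\rho,\mu)$ must lie in $U$. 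The main obstacle is precisely to convert this into $\Omega_{0}m=0$: one must show, using $m\notin U$ together with an induction on $\mathrm{ht}(\lambda-\mu)$, that the eigencomponent of eigenvalue $(\mu+2\rho,\mu)$ already represents $m$ and the remaining eigencomponents \emph{vanish} rather than merely lie in $U$—i.e.\ that $m$ may be taken to be a genuine $\Omega$-eigenvector. Controlling these ``wrong'' eigencomponents inside $U$, rather than only modulo $U$, is where the real difficulty lies and is the step requiring the full freeness of $M(\lambda)$ over $U(\mathfrak{g})$.
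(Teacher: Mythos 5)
Your first step --- the induction on $\mathrm{ht}(\alpha)$ showing that $e_{\alpha}^{(k)}m\in U$ for every $\alpha\in\Delta_{+}$, hence $\Omega_{0}(m)\in U$ --- is correct, and it is in fact a more careful justification of a point the paper passes over silently (the paper simply asserts $\Omega_{0}(m)\in U$). But your second step is a genuine gap, and you flag it yourself: you never prove that the ``wrong'' $\Omega$-eigencomponents of $m$ vanish rather than merely lie in $U$, and no proof of that is sketched. The ingredient you are missing is the one the paper makes central: the paper takes the primitive vector to be a \emph{monomial} $m=f_{i_{1},s_{1}}\cdots f_{i_{r},s_{r}}v_{\lambda}$ and proves, by induction on $r$ using the relation $[\Omega_{0},f_{il}]=2d_{i}l\,f_{il}h_{i}$ established in the proof of Proposition \ref{prop:commutation}, that $\Omega_{0}(m)=Cm$ for a scalar $C$. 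Once $\Omega_{0}$ acts on $m$ by a scalar, your step one finishes the argument in one line: $Cm=\Omega_{0}(m)\in U$ while $m\notin U$, so $C=0$. Without the scalar statement there is nothing to force the conclusion.

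Moreover, the obstacle you ran into is not a technicality that freeness of $M(\lambda)$ or more bookkeeping can overcome: for a primitive vector that is not an $\Omega$-eigenvector the assertion is simply false, so the proposition must be read, as the paper's proof implicitly does, for monomial (equivalently, $\Omega$-eigen-) primitive vectors. Concretely, take a single even imaginary index $i$ with $a_{ii}=-2$, $d_{i}=1$, and $\lambda\in P^{+}$ with $\langle h_{i},\lambda\rangle=0$. Put $m=f_{i1}^{2}v_{\lambda}+f_{i2}v_{\lambda}$ and $U=U(\mathfrak{g})f_{i1}v_{\lambda}$. Then $e_{i1}m=2f_{i1}v_{\lambda}\in U$ and $e_{il}m=0$ for $l\ge 2$, while $m\notin U$ because the weight space $U_{\lambda-2\alpha_{i}}$ is spanned by $f_{i1}^{2}v_{\lambda}$; so $m$ is primitive in the sense of \eqref{eq:primitive}. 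Yet
\begin{equation*}
\Omega_{0}(m)=2d_{i}\bigl(2\langle h_{i},\lambda\rangle-a_{ii}\bigr)f_{i1}^{2}v_{\lambda}=4f_{i1}^{2}v_{\lambda}\neq 0,
\end{equation*}
a nonzero element of $U$: here $f_{i1}^{2}v_{\lambda}$ is exactly the ``wrong'' eigencomponent you could not eliminate, and indeed it does not vanish. The repair consistent with your setup is not to chase $\Omega_{0}(m)=0$ for the original $m$, but either to assume $m$ is a monomial from the outset (the paper's route), or to replace $m$ by its $\Omega$-eigencomponent $m_{c_{0}}$ of eigenvalue $(\mu+2\rho,\mu)$, which by your own observations is again primitive with respect to the same $U$ and automatically satisfies $\Omega_{0}(m_{c_{0}})=0$; the latter statement is what the character-formula argument actually uses.
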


\begin{proof}
Let $m = \prod_{k=1}^{r} f_{i_k, s_k} \in M(\lambda)_{\mu} v_{\lambda}$  be a primitive vector.
 We claim that
 $$\Omega_{0}(m) = C m \ \  \text{for some} \ C \in \C.$$

If $r = 1$,  then $\Omega_{0}(f_{i_1, s_1} v_{\lambda})
 = 2 d_{i_1} s_1 \langle h_{i_1}, \lambda \rangle f_{i_1, s_1} v_{\lambda}$.
 Hence our claim is true.

 \vskip 2mm

 Assume that our assertion is true for the monomials of height $(\le r-1)$.
 Then the following calculation yields
 \begin{equation*}
 \begin{aligned}
 \Omega_{0}(m) & = [\Omega_{0}, f_{i_1, s_1}] \prod_{k=2}^{r} f_{i_k, s_k} v_{\lambda}
 + f_{i_1, s_1} \Omega_{0}(\prod_{k=2}^r f_{i_k, s_k} v_{\lambda}) \\
 & =  (s_{1}^{2}-s_{1}) (\alpha_{i_1},\alpha_{i_1}) \prod_{k=1}^r f_{i_k, s_k} v_{\lambda}
 + C \prod_{k=1}^r f_{i_k, s_k} v_{\lambda}\\
 & = ((s_{1}^2 - s_{1}) (\alpha_{i_1}, \alpha_{i_1}) + C) \prod_{k=1}^r f_{i_k, s_k} v_{\lambda},
 \end{aligned}
 \end{equation*}
 as desired.

 \vskip 2mm

 Since $m$ is a primitive vector, there exists a submodule $U \subset M(\lambda)$
 satisfying \eqref{eq:primitive}.
 Note that
 $$\Omega_0(m) = 2 \sum_{\alpha \in \Delta_{+}} \sum e_{-\alpha}^{(k)} e_{\alpha}^{(k)} m \in U.$$
 But
$$\Omega_{0}(m) = C m \notin U \ \ \text{unless} \ \ C=0,$$
 from which we deduce our assertion.
\end{proof}

  \vskip 2mm

  Let $M(\lambda)$ be the Verma module with highest weight $\lambda \in P^{+}$
  and let $m = \prod_{k=1}^{r} f_{i_k, s_k} \in M(\lambda)_{\mu}$ be a primitive vector.

  \vskip 2mm

Write
$$\lambda - \mu = \beta = \sum_{k=1}^{r} s_{k} \alpha_{i_k}
= \sum_{i \in I} a_{i} \alpha_{i} \ \ (a_{i}>0).$$

Assume that $(\mu + \rho, \alpha_{i}) \ge 0$ for all $i \in I^{\text{re}}$.
(For simplicity, we will often write $\mu + \rho  \in C$, where $C$ denotes the
{\it Weyl chamber}.)
Then
$$\beta - \alpha_i = \sum_{j \neq i} a_j \alpha_j + (a_{i}-1) \alpha_i.$$
In particular, if $i \in I^{\text{im}}$, we have
$$(\beta - \alpha_i, \alpha_i)
= \sum_{j \neq i} a_j (\alpha_j , \alpha_i)
+ (a_i - 1) (\alpha_i, \alpha_i) \le 0.$$

Since $m$ is a primitive vector of weight $\mu$, we have
\begin{equation*}
\Omega(m) = (\mu + 2 \rho, \mu) m
= ((\lambda + 2 \rho, \lambda) + \sum_{k=1}^{r} s_k (s_k - 1) (\alpha_{i_k}, \alpha_{i_k}))\, m,
\end{equation*}
 which implies
 $$(\mu + 2 \rho, \mu) - (\lambda + 2 \rho, \lambda)
 = \sum_{k=1}^{r} s_k (s_k -1) (\alpha_{i_k}, \alpha_{i_k}).$$

If $i \in I^{\text{re}}$,  then
\begin{equation*}
(\alpha_i, \mu + 2 \rho) = (\alpha_i, \mu + \rho) + \frac{1}{2} (\alpha_i, \alpha_i) >0,
\end{equation*}
and if $i \in I^{\text{im}}$, we have
\begin{equation*}
\begin{aligned}
(\alpha_i, \mu + 2 \rho) & = (\alpha_i, \lambda - \beta + 2 \rho)
= (\alpha_i, \lambda - \beta) + (\alpha_i, \alpha_i) \\
& = (\alpha_i, \lambda) - (\beta - \alpha_i, \alpha_i) \ge 0.
\end{aligned}
\end{equation*}

Therefore we obtain
\begin{equation*}
\begin{aligned}
\sum_{k=1}^{r} &  s_{k} (s_{k} - 1) (\alpha_{i_k}, \alpha_{i_k})
= (\mu + 2 \rho, \mu) - (\lambda+ 2 \rho, \lambda) \\
&  = (\mu - \lambda, \mu + \lambda + 2 \rho)
 = - \sum_{i \in I} a_{i} (\alpha_i, \lambda)
 - \sum_{i \in I} a_i (\alpha_i, \mu + 2 \rho) \\
 & = - \sum_{i \in I} a_i (\alpha_{i}, \lambda)
 - \sum_{i \in I^{\text{re}}} a_{i} (\alpha_i, \mu + 2 \rho)
 - \sum_{i \in I^{\text{im}}} a_{i} (\alpha_i, \mu + 2 \rho) \\
 & < - \sum_{i \in I} (\alpha_i, \lambda) - \sum_{i \in I^{\text{re}}} a_i
 - \sum_{i \in I^{\text{im}}} a_{i}(\alpha_{i}, \mu + \alpha_{i}) \\
 & = - \sum_{i \in I^{\text{re}}} a_{i} - \sum_{i \in I^{\text{im}}} a_{i}(\alpha_{i}, \lambda)
 - \sum_{i \in I^{\text{im}}} a_{i} (\alpha_{i}, \lambda - \sum_{j \in I^{\text{im}}} a_{j} \alpha_{j} + \alpha_{i}) \\
 & = - \sum_{i \in I^{\text{re}}} a_{i} - 2 \sum_{I^{\text{im}}} a_{i} (\alpha_i, \lambda)
 + \sum_{j \neq i} a_{i} a_{j} (\alpha_{i}, \alpha_{j})
 + \sum_{i \in I^{\text{im}}} a_{i} (a_{i} - 1) (\alpha_i, \alpha_i).
\end{aligned}
\end{equation*}

It follows that
\begin{equation} \label{eq:F}
\begin{aligned}
0 < &  - \sum_{i \in I^{\text{re}}} a_{i} - 2 \sum_{i \in I^{\text{im}}} a_{i} (\alpha_{i}, \lambda)
+ \sum_{i \in I^{\text{im}} \atop j \neq i}  a_{i} a_{j} (\alpha_i, \alpha_j) \\
& + \sum_{i \in I^{\text{im}}} (\alpha_i, \alpha_i)
\left(a_i (a_i -1) - \sum_{i_k = i} s_{k}(s_{k} - 1)\right).
\end{aligned}
 \end{equation}

 On the other hand, since $\sum_{i_k = i}  s_{k} = a_{i}$, we have
 $$a_{i} (a_{i} - 1) - \sum_{i_k = i} s_{k}(s_{k}-1) \ge 0$$
 where the equality holds if and only if there exists only one term in the sum.

 \vskip 2mm

 Since $(\alpha_i,\alpha_j) \le 0$ for $i \neq j$ and $(\alpha_i, \alpha_i) \le 0$ for $i \in I^{\text{im}}$,
 for $\beta = \sum_{k=1}^{r} s_{k} \alpha_{i_k}$, we conclude:

 \begin{itemize}

 \item[(a)] all $\alpha_{i_k}$ $(1 \le k  \le r)$ are imaginary simple roots,

 \vskip 2mm

 \item[(b)] $(\alpha_{i_k}, \alpha_{i_l}) = 0$ for all $1 \le k, l \le r$,

 \vskip 2mm

 \item[(c)] $(\alpha_{i_k}, \lambda) = 0$ for all $1 \le k \le r$,

 \vskip 2mm

 \item[(d)] if $i \notin I^{\text{iso}}$, then
 $\#\{k \mid i_k = i \} = 1.$

  \end{itemize}

 \vskip 2mm

 We will now give a character formula for the irreducible highest weight module $V(\lambda)$
 with $\lambda \in P^{+}$.
 As in \cite{Kac90}, we have
 \begin{equation} \label{eq:charA}
 (e^{\rho} R) \text{ch} V(\lambda) = \sum_{\mu \le \lambda} c(\mu) e^{\mu + \rho},
 \end{equation}
 where $c(\lambda) = 1$, $c(\mu) \in \Z$ and $c(\mu) = 0$ unless $\mu$ is a primitive weight.

 \vskip 2mm

 Applying $w \in W$ on \eqref{eq:charA},
thanks to Lemma \ref{lem:wR}, we obtain
\begin{equation*}
\begin{aligned}
\sum_{\mu \le \lambda} c(\mu) e^{\mu + \rho}
 & = \sum_{\mu \le \lambda} \epsilon(w) c(\mu) e^{w(\mu + \rho)} \\
& = \sum_{\mu + \rho \in C} \sum_{w \in W} \epsilon(w) c(\mu) w^{w(\mu + \rho)}.
\end{aligned}
\end{equation*}

 Set
 $$S := \sum_{\mu + \rho \in C} c(\mu) e^{\mu+\rho}.$$

 Then we have
 \begin{equation} \label{eq:Slambda}
 \sum_{\mu \le \lambda}  c(\mu) e^{\mu + \rho}
 = \sum_{w \in W} \epsilon(w) w(S).
  \end{equation}

 Therefore it suffices to compute $S$ to obtain the character formula.

 \vskip 2mm

 We define $E_{\lambda}$ to be the set of elements of the form
 $\alpha = \sum_{k=1}^{r} a_{k} \alpha_{i_k}$ $(a_{k} \in \Z_{>0})$
 satisfying the following conditions:

 \begin{itemize}

\item[(i)] all $\alpha_{i_k}$ are {\it even} imaginary simple roots for $1 \le k \le r$,

\vskip 2mm

\item[(ii)] $(\alpha_{i_k}, \alpha_{i_l}) = 0$ for all $1 \le k, l  \le r$,

\vskip 2mm

\item[(iii)] $(\alpha_{i_k}, \lambda) = 0$ for all $1 \le k \le r$.

 \end{itemize}

 \vskip 2mm

 For an element $\alpha = \sum_{k=1}^{r} a_{k} \alpha_{i_k}  \in E_{\lambda}$, we define
 \begin{equation} \label{eq:signE}
 \begin{aligned}
 & d_{i}(\alpha) = \begin{cases}
 \#\{k \mid i_k = i \} \ \ & \ \text{if} \ i \notin I^{\text{iso}}, \\
 \sum_{i_k = i} a_{k} \ \ & \ \text{if} \ i \in I^{\text{iso}},
 \end{cases} \\
 & \epsilon(\alpha) =
 \prod_{i \notin I^{\text{iso}}} (-1)^{d_{i}(\alpha)} \prod_{i \in I^{\text{iso}}} \phi(d_i(\alpha)),
 \end{aligned}
 \end{equation}
 where $\phi(n)$ is defined by $\prod_{k=1}^{\infty} (1 - q^k) = \sum_{n=0}^{\infty} \phi(n) q^n$.

 \vskip 2mm

 On the other hand, we define $O_{\lambda}$ to be the set of elements of the form
 $\beta = \sum_{l=1}^{s} b_{l} \alpha_{i_l}$ $(b_{l} \in \Z_{>0})$
  satisfying the following conditions:
 \begin{itemize}

\item[(i)] all $\alpha_{i_l}$ are {\it odd} imaginary simple roots for $1 \le l \le s$,

\vskip 2mm

\item[(ii)] $(\alpha_{i_k}, \alpha_{i_l}) = 0$ for all $1 \le k, l  \le s$,

\vskip 2mm

\item[(iii)] $(\alpha_{i_l}, \lambda) = 0$ for all $1 \le l \le s$.

 \end{itemize}

 \vskip 2mm

 For an element $\beta =  \sum_{l=1}^{s} b_{l} \alpha_{i_l}  \in O_{\lambda}$, we define
 \begin{equation} \label{eq:signO}
 \begin{aligned}
 & d_{i}(\beta) = \begin{cases}
 \#\{l \mid i_l = i \} \ \ & \ \text{if} \ i \notin I^{\text{iso}}, \\
 \sum_{i_l = i} b_{l} \ \ & \ \text{if} \ i \in I^{\text{iso}},
 \end{cases} \\
 & \epsilon(\beta) =
 \prod_{i \notin I^{\text{iso}}} (-1)^{d_{i}(\beta)} \prod_{i \in I^{\text{iso}}}
 c(\lambda - d_{i}( \beta) \alpha_{i}).
 \end{aligned}
 \end{equation}

\vskip 2mm

Finally, we define $F_{\lambda}$ to be the elements of the form $s = \alpha + \beta$
with $\alpha \in E_{\lambda}$, $\beta \in O_{\lambda}$ such that
$(\alpha, \beta) = 0$.
Set
$\epsilon(s) = \epsilon(\alpha) \epsilon(\beta)$,
and define
$$S_{\lambda} = \sum_{s \in F_{\lambda}} \epsilon(s) e^{-s}.$$

In the following theorem, we will prove $S = S_{\lambda}$,
which would give us the desired character formula.

\vskip 2mm

\begin{theorem} \label{thm:main}
{\rm
Let  $V(\lambda)$ be the irreducible highest weight module with
a dominant integral weight $\lambda \in P^{+}$.

\vskip 2mm

Then the character of $V(\lambda)$ is given by the formula
\begin{equation} \label{eq:main}
(e^{\rho} R) \text{ch} V(\lambda) = \sum_{w \in W} \epsilon(w) e^{w(\lambda + \rho)} w(S_{\lambda}).
\end{equation}
}
\end{theorem}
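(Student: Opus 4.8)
The plan is to reduce the theorem to the single identity $S = e^{\lambda+\rho}S_\lambda$. Granting this, substitute into \eqref{eq:Slambda} and apply Lemma \ref{lem:wR}: the left-hand side $\sum_{w\in W}\epsilon(w)w(S)$ becomes $\sum_{w\in W}\epsilon(w)e^{w(\lambda+\rho)}w(S_\lambda)$, which is exactly \eqref{eq:main}. Both $S$ and $e^{\lambda+\rho}S_\lambda$ are supported in the closed fundamental chamber: for $s\in F_\lambda$ every constituent imaginary simple root is orthogonal to each real $\alpha_i$ (since $(\alpha_i,\alpha_j)=d_ia_{ij}\le 0$ for $i\neq j$, hence $(s,\alpha_i)\le 0$), so $(\lambda+\rho-s,\alpha_i)\ge(\lambda+\rho,\alpha_i)\ge 0$ for all $i\in I^{\text{re}}$. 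It therefore suffices to match the two expressions coefficient by coefficient on this chamber.

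First I would pin down the support of $S$. By \eqref{eq:charA} only primitive weights $\mu$ contribute, and Proposition \ref{prop:Omega0} together with the Casimir relations of Proposition \ref{prop:commutation} forces $(\mu+2\rho,\mu)=(\lambda+2\rho,\lambda)$ on every primitive vector. Running this equality through the chain of inequalities culminating in \eqref{eq:F} produces precisely conditions (a)--(d): $\lambda-\mu$ is a sum of pairwise orthogonal imaginary simple roots, each orthogonal to $\lambda$, with the non-isotropic ones occurring with multiplicity one. Splitting such a sum into its $I^{\text{even}}$- and $I^{\text{odd}}$-parts $\alpha+\beta$, pairwise orthogonality forces $(\alpha,\beta)=0$, so $\lambda-\mu\in F_\lambda$ with $\alpha\in E_\lambda$ and $\beta\in O_\lambda$. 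Hence $S=\sum_{s\in F_\lambda}c(\lambda-s)e^{\lambda+\rho-s}$, and the theorem is equivalent to the coefficient identity $c(\lambda-s)=\epsilon(s)$ for every $s\in F_\lambda$.

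The principal task is to evaluate these coefficients, and I would do so one orthogonal block at a time. Because the constituents of $s$ are mutually orthogonal, the relation $[f_{il},f_{jk}]=0$ for $a_{ij}=0$ shows that the singular vectors $f_{i,l}v_\lambda$ --- singular precisely because $\langle h_i,\lambda\rangle=d_i^{-1}(\alpha_i,\lambda)=0$ --- attached to distinct indices generate commuting sub-structures, so $c(\lambda-s)$ factors over the distinct indices and should assemble into $\epsilon(\alpha)\epsilon(\beta)$. For a non-isotropic index only one generator $f_{i,s_i}v_\lambda$ is permitted by (d) (its higher powers fail to be singular since $a_{ii}<0$), contributing $-1$ and yielding the factors $\prod_{i\notin I^{\text{iso}}}(-1)^{d_i}$ of \eqref{eq:signE} and \eqref{eq:signO}. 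For an \emph{even} isotropic index the relation $[e_{il},e_{ik}]=0$ (valid when $a_{ii}=0$) forces $\dim\g_{l\alpha_i}=1$, so the corresponding rank-one irreducible is one-dimensional and the rank-one denominator collapses to the Euler function $\prod_{k\ge 1}(1-e^{-k\alpha_i})$; reading off coefficients via \eqref{eq:PBW} gives exactly $\phi(d_i(\alpha))$. For an \emph{odd} isotropic index the same tower must be analysed inside the rank-one Borcherds-Bozec \emph{super}algebra attached to $\alpha_i$, and the super-PBW expansion records its coefficients as the self-referential numbers $c(\lambda-d_i(\beta)\alpha_i)$ of \eqref{eq:signO}.

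I expect this coefficient computation to be the main obstacle, and within it the odd isotropic case. Two points require care. First, one must show that the vectors $f_{i,l}v_\lambda$ exhaust the primitive vectors in each orthogonal block, so that the inclusion--exclusion is complete and no primitive weight is overlooked; here the invariance and super-symmetry of the form from Proposition \ref{prop:g-bilinear}, together with Lemma \ref{1}, serve to decouple the $\Omega_0$-action across distinct blocks and to justify the factorisation of $c$. Second, for odd $i$ the square $f_{i,1}^2=\tfrac12[f_{i,1},f_{i,1}]$ is an even element that need not vanish and interacts with the higher generators $f_{i,l}$, so the rank-one module is genuinely super and the clean Euler-function bookkeeping of the even case is replaced by the recursion for $c(\lambda-d\alpha_i)$. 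Once the factorisation over orthogonal blocks is in place, the even, odd, isotropic and non-isotropic contributions assemble into $\epsilon(s)=\epsilon(\alpha)\epsilon(\beta)$, giving $c(\lambda-s)=\epsilon(s)$, hence $S=e^{\lambda+\rho}S_\lambda$ and the theorem.
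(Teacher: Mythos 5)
Your opening reduction --- the theorem is equivalent to $S=e^{\lambda+\rho}S_\lambda$, i.e.\ to the coefficient identity $c(\lambda-s)=\epsilon(s)$ for $s\in F_\lambda$ --- and your support analysis via Propositions \ref{prop:commutation} and \ref{prop:Omega0} match the paper's setup; but that much is already done in the paper \emph{before} the theorem is stated. The genuine gap is that everything after this point, which is the entire content of the paper's proof, is deferred rather than proved. You assert that $c(\lambda-s)$ factors over the orthogonal blocks of $s$ because the sub-superalgebras attached to distinct orthogonal indices commute, but commutation at the module level does not by itself give multiplicativity of the coefficients in \eqref{eq:charA}: the $c(\mu)$ are coefficients of the global series $(e^{\rho}R)\,\mathrm{ch}\,V(\lambda)$, and to evaluate them blockwise one first needs the bridge, stated explicitly in the paper, that the dominant-chamber terms of $(e^{\rho}R)\,\mathrm{ch}\,V(\lambda)$ arise \emph{only} from $e^{\lambda+\rho}R$ --- because every weight $\lambda-\sum l_i\alpha_i\neq\lambda$ of $V(\lambda)$ involves some $\alpha_i$ with $(\alpha_i,\lambda)\neq 0$, whereas each $s\in F_\lambda$ is supported on simple roots orthogonal to $\lambda$. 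Only after this step does $c(\lambda-s)$ become a coefficient of $R$, which then factors over blocks and can be computed in rank one; without it, your ``reading off coefficients via \eqref{eq:PBW}'' has nothing to act on.

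Even granting that bridge, the two points you flag as ``requiring care'' are exactly where the paper's proof lives, and your expectations about them are partly inverted. For an odd \emph{isotropic} index $i$, the square $f_{i,l}^{2}=\tfrac12[f_{i,l},f_{i,l}]$ vanishes by the defining relation $[f_{il},f_{jk}]=0$ for $a_{ij}=0$ applied with $i=j$, so that block is just an exterior algebra --- it is not where the supercommutator difficulty sits. Conversely, for a \emph{non-isotropic} imaginary index (even or odd) your one-line claim of a factor $-1$ is not automatic: there the rank-one positive part is a free Lie (super)algebra on the generators $e_{il}$, $l\ge 1$, and one needs the PBW/Witt-type computation showing that the corresponding block of $R$ equals $(1-2q)/(1-q)=1-\sum_{a\ge1}q^{a}$, which you do not supply. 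Finally, the multiplicativity $c(\lambda-\sum_t l_t\gamma_t)=\prod_t c(\lambda-l_t\gamma_t)$ across odd isotropic blocks --- needed because $\epsilon(\beta)$ in \eqref{eq:signO} is defined through these very numbers --- is the heart of the matter and is left unproved. The paper establishes precisely these facts by a quite different mechanism: it rewrites the identity as \eqref{5} with the even factors on one side and the odd factors on the other, exploits the evenness of the left-hand side in a parity argument, and runs a double induction (on the number of odd non-isotropic roots, then on the height of the odd isotropic part) through the inclusion--exclusion identity \eqref{23}, with the base case imported from Ray's character formula. Your structural route could plausibly be completed, but as written the proposal stops at the threshold of the actual proof.
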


\begin{proof}
  Recall that
  $$(e^{\rho} R) \text{ch}\ V(\lambda) =\sum_{w \in W}\epsilon(w) w(S),$$
  where
  $$S = \sum_{\mu + \rho \in C} c(\mu) e^{\mu+\rho},$$
which can be rephrased as
  \begin{equation}\label{5}
  e^{\rho}\text{ch}\ V(\lambda)\prod_{\alpha\in\Delta_{0}^{+}}(1-e^{-\alpha})^{\text{dim}\ \g_{\alpha}}
  =\sum_{w\in W}\epsilon(w) w(S)\prod_{\alpha\in\Delta_{1}^{+}}(1+e^{-\alpha})^{\text{dim}\ \g_{\alpha}}.
  \end{equation}

  If $e^{\lambda-\sum l_{i}\alpha_{i}}$ is a term of $\text{ch}\ V(\lambda)$,\ then some $\alpha_{i}$ has nonzero inner product with $\lambda$.
  Hence the only terms of the form $e^{\lambda-\sum l_{i}\alpha_{i}+\rho}$ in  $S$
  are coming from $e^{\lambda+\rho}R$.

  \vskip 2mm

  Set $\lambda-\mu=\delta+\tau$,  $\delta=\sum s_{k}\alpha_{i_{k}}$, where $\alpha_{i_{k}}$ are even imaginary simple roots
and  $\tau=\sum_{k=1}^{n} c_{k}\beta_{k}+\sum_{t=1}^{m}l_{t}\gamma_{t}$ where  $\beta_{k}$ are odd nonisotropic imaginary simple roots, $\gamma_{t}$ are odd isotropic imaginary simple roots.

  \vskip 2mm

  {\bf Claim:} \
We have
   \begin{equation}\label{23}
   c(\mu)+\sum_{t=1}^{m}\sum_{l_{i_{t}}^{'}\le l_{i_{t}}}c(\mu+\sum_{k\leq t} l_{i_{k}}^{'}\gamma_{i_{k}})\prod_{k=1}^{t}\phi_{i_{k}}^{'}=\epsilon(\delta)(-1)^{n}\prod_{k=1}^{m}\phi_{\frac{l_{k}}{2}},
   \end{equation}
   where $\prod_{k\ge1}(1+q^{2k-1})=\sum \phi_{l}^{'}q^{l}$.

   \vskip 2mm

  To prove our claim,  we will use the induction on $n$.
   When $n=0$, the equality is correct.

  \vskip 2mm

  Without loss of generality,
  we may assume that $c_{k}$ are odd.
  For any given positive integer $N$, we assume that $n\ge 0$ and the equality holds for $n\textless N$.
 We would like to prove our claim when $n = N$.

    When  $n=N$, since the left-hand side of (\ref{5}) is even, one can see that
  \begin{align*}
    &c(\mu)+\sum_{k=1}^{n}c(\mu+\sum_{r\leq k}c_{j_{r}}\beta_{j_{r}})+\sum_{k=1}^{n}\sum_{t=1}^{m}\sum_{l_{i_{t}}^{'}\le l_{i_{t}}}c(\mu+\sum_{r\leq k}c_{j_{r}}\beta_{j_{r}}+\sum_{p\leq t}l_{i_{p}}^{'}\gamma_{i_{p}})\prod_{a=1}^{t}\phi_{i_{a}}^{'}\\
    &+\sum_{t=1}^{m}\sum_{l_{i_{t}}^{'}\le l_{i_{t}}}c(\mu+\sum_{p\leq t}l_{i_{p}}^{'}\gamma_{i_{p}})\prod_{k=1}^{t}\phi_{i_{k}}^{'}=0.
  \end{align*}
  By our induction hypothesis, we have
  \begin{align*}
  &\sum_{k=1}^{n}c(\mu+\sum_{r\leq k}c_{j_{r}}\beta_{j_{r}})+\sum_{k=1}^{n}\sum_{t=1}^{m}\sum_{l_{i_{t}}^{'}\le l_{i_{t}}}c(\mu+\sum_{r\leq k}c_{j_{r}}\beta_{j_{r}}+\sum_{p\leq t}l_{i_{p}}^{'}\gamma_{i_{p}})\prod_{a=1}^{t}\phi_{i_{a}}^{'}\\
  &=\epsilon(\delta)\sum_{k=1}^{n} \binom{n}{k} (-1)^{n-k}\prod \phi_{\frac{l_{t}}{2}}=\epsilon(\delta)(-1)^{n+1}\prod \phi_{\frac{l_{t}}{2}}.
  \end{align*}
  Hence (\ref{23}) is correct.

  \vskip 2mm

  Next, we will show that
  $$c(\mu)=\epsilon(\delta)(-1)^{n}c(\lambda-\sum l_{t}\gamma_{t}).$$

  Set $\gamma=\sum l_{t}\gamma_{t}$.
  When $\text{ht}\ (\gamma)=m$,  all $l_{t} = 1$. Hence by \cite{Ray95}, we have
  $$c(\mu)=\epsilon(\delta) (-1)^{n+m}=\epsilon(\delta)(-1)^{n}c(\lambda-\sum \gamma_{t}).$$
  Given a positive integer $N$, if $\text{ht}\ (\gamma)\textless N$, assume that
  $$c(\mu)=\epsilon(\delta)(-1)^{n}c(\lambda-\gamma).$$
    When $\text{ht}\ (\gamma)=N$, thanks to (\ref{23}) and our induction
  hypothesis, a direct calculation yields
  $$c(\lambda-\gamma)+\sum_{t=1}^{m}\sum_{l_{i_{t}}^{'}\le l_{i_{t}}}c(\lambda-\gamma+\sum_{k\leq t} l_{i_{k}}^{'}\gamma_{i_{k}})\prod_{k=1}^{t}\phi_{i_{k}}^{'}=\prod_{k=1}^{m}\phi_{\frac{l_{k}}{2}}$$
 $$c(\mu)+\epsilon(\delta)(-1)^{n}\sum_{t=1}^{m}\sum_{l_{i_{t}}^{'}\le l_{i_{t}}}c(\lambda-\gamma+\sum_{k\leq t} l_{i_{k}}^{'}\gamma_{i_{k}})\prod_{k=1}^{t}\phi_{i_{k}}^{'}=\epsilon(\delta)(-1)^{n}\prod_{k=1}^{m}\phi_{\frac{l_{k}}{2}}.$$
 Hence
  $$c(\mu)=\epsilon(\delta)(-1)^{n}c(\lambda-\gamma).$$

  Finally, we will check
  $$c(\lambda-\gamma)=\prod_{t}c(\lambda-l_{t}\gamma_{t}).$$

  If $\text{ht}\ (\gamma)=m$, the equality holds because $c(\lambda-\gamma)=(-1)^{m}$.

\vskip 2mm

Suppose that $\text{ht} (\gamma)=N$. Then
we have $ c(\lambda-\gamma)=\prod_{t=1}^{m}c(\lambda-l_{t}\gamma_{t})$.
When $\text{ht} (\gamma)=N+1$,  let $\gamma=\sum_{t\ge2}l_{t}\gamma_{t}+(l_{1}+1)\gamma_{1},$ and $\mu=\lambda-\gamma$.
From (\ref{23}), we can deduce
  \begin{align*}
  c(\mu) + &\sum_{t\le m, 2\le i_{p}}\sum_{l^{'}\le  l}c(\mu+\sum_{p\leq t}l_{i_{p}}^{'}\gamma_{i_{p}})\prod_{k=1}^{t}\phi_{l_{i_{k}}^{'}}^{'}+\sum_{t\le m, 2\le i_{p}}\sum_{l^{'}\le  l}c(\mu+l_{1}^{'}\gamma_{1} \\
  &+\sum_{p\leq t}l_{i_{p}}^{'}\gamma_{i_{p}})\prod_{k=1}^{t} \phi_{l_{i_{k}}^{'}}^{'}\phi_{l_{1}^{'}}^{'}
 +\sum_{l_{'}\le l_{1}}c(\mu+l_{1}^{'}\gamma_{1})\phi_{l_{1}^{'}}^{'}  \\
  &=\prod_{t=1}^{m}\phi_{\frac{l_{t}}{2}}.
  \end{align*}

  Our induction hypothesis gives
  \begin{align*}
     {\rm (i)} & \sum_{t\le m, 2\le i_{p}}\sum_{l^{'}\le  l}c(\mu+\sum_{p\leq t}l_{i_{p}}^{'}\gamma_{i_{p}})\prod_{k=1}^{t}\phi_{l_{i_{k}}^{'}}^{'} \\
    & =  c(\lambda-(l+1)\gamma_{1})\sum_{t\le m, 2\le i_{t}}\sum_{l^{'}\le  l}c(\lambda-\sum_{k\ge 2}l_{k}\gamma_{k}+\sum_{p\leq t}l_{i_{p}}^{'}\gamma_{i_{p}})\prod_{k=1}^{t}\phi_{l_{i_{k}}^{'}}^{'}, \\
    {\rm (ii)} &  \sum_{t\le m, 2\le i_{p}}\sum_{l^{'}\le  l}c(\mu+l_{1}^{'}\gamma_{1}+\sum_{p\leq t}l_{i_{p}}^{'}\gamma_{i_{p}})\prod_{k=1}^{t} \phi_{l_{i_{k}}^{'}}^{'}\phi_{l_{1}^{'}}^{'}\\
   & = \sum_{l_{1}^{'}\le l_{1}+1}c(\lambda-(l_{1}+l-l_{1}^{'})\gamma_{1})\phi_{l_{1}^{'}}\sum_{t\le m, 2\le i_{p}}\sum_{l^{'}\le  l}c(\lambda-\sum_{t\ge 2}l_{t}\gamma_{t}+\sum_{p\leq t}l_{i_{p}}^{'}\gamma_{i_{p}})\prod_{k=1}^{t}\phi_{l_{i_{k}}^{'}}^{'},\\
     {\rm (iii)}  & \ \ \   \sum_{l_{'}\le l_{1}}c(\mu+l_{1}^{'}\gamma_{1})\phi_{l_{1}^{'}}^{'}=\sum_{l_{1}^{'}\le l_{1}}c(\lambda-(l_{1}+1-l_{1}^{'})\gamma_{1})\phi_{l_{1}^{'}}c(\lambda-\sum_{t\ge 2}l_{t}\gamma_{t}).
  \end{align*}

  Hence we obtain
  \begin{align*}
  &\sum_{t\le m, 2\le i_{p}}\sum_{l^{'}\le  l}c(\mu+\sum_{p\leq t}l_{i_{p}}^{'}\gamma_{i_{p}})\prod_{k=1}^{t}\phi_{l_{i_{k}}^{'}}^{'}+\sum_{t\le m, 2\le i_{p}}\sum_{l^{'}\le  l}c(\mu+l_{1}^{'}\gamma_{1}+\sum_{p\leq t}l_{i_{p}}^{'}\gamma_{i_{p}})\prod_{k=1}^{t} \phi_{l_{i_{k}}^{'}}^{'}\phi_{l_{1}^{'}}^{'} \\
  &=\phi_{\frac{l+1}{2}}\sum_{t\le m, 2\le i_{p}}\sum_{l^{'}\le  l}c(\lambda-\sum_{t\ge 2}l_{t}\gamma_{t}+\sum_{p\leq t}l_{i_{p}}^{'}\gamma_{i_{p}})\prod_{k=1}^{t}\phi_{l_{i_{k}}^{'}}^{'}.
  \end{align*}
 It follows that
$$c(\mu)=c(\lambda-(l_{1}+1)\gamma_{1})\prod_{t\ge 2}c(\lambda-l_{t}\gamma_{t}),$$

  Thus   we coclude
  $$c(\mu)=\epsilon(\delta)(-1)^{n}\prod_{t=1}^{m} c(\lambda-l_{t}\gamma_{t}),$$
  which proves $S = S_{\lambda}$.
 \end{proof}

\vskip 7mm

\end{document}